\newcommand{\Eqref}[1]{Eq.\,\eqref{#1}}
\newtheorem{theorem}{Theorem}[section]
\newtheorem{corollary}[theorem]{Corollary}
\newtheorem{definition}[theorem]{Definition}
\newtheorem{example}[theorem]{Example}
\newtheorem{comment}[theorem]{Remark}
\newenvironment{proof}[1][Proof]{\noindent\textbf{#1.} }{\ \rule{0.5em}{0.5em}}
\newcommand{\E}{{\rm \bf E}}
\renewcommand{\P}{{\rm \bf P}}
\newcommand{\len}{{\rm stage}}
\newcommand{\prob}{{\rm \bf P}}
\newcommand{\supp}{{\rm supp}}
\newcommand{\dN}{{\mathbb N}}
\newcommand{\dR}{{\mathbb R}}
\newcommand{\calB}{{\cal B}}
\newcommand{\calF}{{\cal F}}
\newcommand{\calR}{{\mathscr R}}
\newcommand{\ep}{\varepsilon}
\newcounter{figurecounter}
\begin{document}

\title{Stochastic Games with General Payoff Functions%
\thanks{
Solan acknowledges the support of the Israel Science Foundation, grants
\#217/17 and \#211/22.}}

\author{J\'{a}nos Flesch\footnote{Department of Quantitative Economics, 
Maastricht University, P.O.Box 616, 6200 MD, The Netherlands. E-mail: j.flesch@maastrichtuniversity.nl.}
\and Eilon Solan\footnote{School of Mathematical Sciences, Tel-Aviv University, Tel-Aviv, Israel, 6997800, E-mail: eilons@tauex.tau.ac.il.}}

\maketitle

\begin{abstract}
We consider multiplayer stochastic games in which the payoff of each player is a bounded and Borel-measurable function of the infinite play. By using a generalization of the technique of Martin (1998) and Maitra and Sudderth (1998), we show four different existence results. In each stochastic game, it holds for every $\ep>0$ that (i) each player has a strategy that guarantees in each subgame that this player's payoff is at least her maxmin value up to $\ep$, (ii) there exists a strategy profile under which in each subgame each player's payoff is at least her minmax value up to $\ep$, (iii) the game admits an extensive-form correlated $\ep$-equilibrium, and (iv) there exists a subgame that admits an $\ep$-equilibrium.
\end{abstract}

\noindent\textbf{MSC 2020:} 91A15, 91A05.

\bigskip
\noindent\textbf{Keywords:} Stochastic game, equilibrium, general payoff, Martin's function, subgame maxmin strategy, extensive-form correlated equilibrium, easy initial state.

\section{Introduction}

Stochastic games, introduced by Shapley (1953), are dynamic games where the players' actions
affect the evolution of a state variable.
These games have been extensively studied in the past 70 years,
both under the discounted payoff
(e.g., Fink (1964), Takahashi (1964), Nowak (1985), Mertens and  Parthasarathy (1987), Duggan (2012), Levy (2013), Levy and McLennan (2015)),
and the long-run average payoff and the uniform approach
(e.g., Mertens and Neyman (1981), Vrieze and Thuijsman (1989), Vieille (2000a,b), Solan and Vieille (2002), Sorin and Vigeral (2013), Venel (2015), Renault and Ziliotto (2020)). For an overview
we refer to 
Filar and Vrieze (2012),
Solan and Vieille (2015), Ja\'{s}kiewicz and Nowak (2018), Levy and Solan (2020), 
and Solan (2022).

Stochastic games with general payoffs, introduced by Blackwell (1969), have also received attention in the literature. In these games, the payoff of a player is usually defined as a bounded and Borel-measurable function of the infinite play. Naturally, the techniques used for these payoff functions have been quite different than those employed for the discounted and the long-run average payoffs. 
Martin (1998), see also Maitra and Sudderth (1998), 
introduced a powerful technique
to the study of two-player zero-sum stochastic games with a general payoff function.
They used the determinacy of alternating-move games to
show that to each two-player zero-sum stochastic game with finite action spaces and countable state space and 
each history of the stochastic game
one can associate a certain auxiliary one-shot game, 
which has the same action spaces as the stochastic game, such that a player can play well in the stochastic game by playing well in the one-shot game at each history. 

These auxiliary one-shot games associated with the histories are induced by a single function assigning a real number to each history, which we term the \emph{Martin function}. This function has been generalized to multiplayer stochastic games in Ashkenazi-Golan, Flesch, Predtetchinski, and Solan (2022a), and they used it to prove the existence of an $\ep$-equilibrium in multiplayer repeated games with tail-measuarable payoffs. This generalization of the Martin function was further applied in Ashkenazi-Golan, Flesch, Predtetchinski, and Solan (2022b) to study regularity properties of the minmax and maxmin values and 
by using them to prove the existence of an $\ep$-equilibrium in multiplayer repeated games under some conditions on the minmax values, 
in Ashkenazi-Golan, Flesch and Solan (2022) to prove the existence of an $\ep$-equilibrium in two-player absorbing games with tail-measurable payoffs, and in Flesch and Solan (2022a) to prove the existence of an $\ep$-equilibrium in two-player stochastic
games with 
a finite state space and
shift-invariant payoffs. 

In the papers mentioned in the previous paragraph,
the use of the Martin function is hidden within the proofs,
and the 
existence 
results are proven for specific families of games:
repeated games with tail-measurable payoffs, 
repeated games under some conditions on the minmax values, or
two-player stochastic games with a finite state space and shift-invariant payoffs.
The goal of this paper is to
emphasize the significance of the Martin function to stochastic games,
and
derive four different existence results for all multiplayer stochastic games with general payoffs, each proven by the use of the Martin function.\smallskip

\noindent\textbf{[1]} We prove that each player has a \emph{subgame $\ep$-maxmin strategy}, for every $\ep>0$. This is a strategy that guarantees, regardless of the opponents' strategies, that in each subgame this player’s payoff is at least her maxmin value up to $\ep$.

This result was already proven, although not explicitly stated, in Mashiah-Yaakovi (2015). 
Unlike the proof in Mashiah-Yaakovi (2015), our proof is
straightforward, based on the Martin function. We also refer to Flesch, Herings, Maes, and Predtetchinski (2021) for the case of only two players.\smallskip

\noindent\textbf{[2]} We prove the existence of a \emph{minmax $\ep$-acceptable strategy profile}, for every $\ep>0$. This is a strategy profile under which in each subgame each player’s payoff is at least her minmax value up to $\ep$. Thus, such a strategy profile induces individually rational payoffs in all subgames, up to $\ep$.

A weaker version of the concept of minmax $\ep$-acceptable strategy profiles was defined in Solan (2018) in the context the long-run average payoff, where the expected payoffs are only required to be individually rational, up to $\ep$, from the initial state of the game. As Solan argued, the existence of such a strategy profile follows from Solan and Vieille (2002).

A priori, it is not clear that a minmax $\ep$-acceptable strategy profile always exists. Indeed, it is not easy to find techniques, other than the Martin function as in our paper, that are suited to the study of minmax $\ep$-acceptable strategy profiles. For example, while one-shot games admit minmax 0-acceptable strategy profiles, we are not aware of a proof for their existence, except by turning to the stronger notion of 0-equilibrium and using 
the fact
that each 0-equilibrium is automatically minmax 0-acceptable. In stochastic games, one cannot use a similar reasoning: while every subgame-perfect $\ep$-equilibrium would be automatically minmax $\ep$-acceptable, a subgame-perfect $\ep$-equilibrium does
not always exist; for a counter-example, see Flesch, Kuipers, Mashiah-Yaakovi, Schoenmakers, Shmaya, Solan, and Vrieze (2014). 

Simon (2016), on Page 202, raised the question whether there can be a three-player stochastic game in which the sum of the payoffs is zero for every infinite play, yet each player’s minmax value is strictly positive. Our result answers this question: such a game cannot exist.\smallskip

\noindent\textbf{[3]} We prove the existence of an \emph{extensive-form correlated $\ep$-equilibrium}, for every $\ep>0$.

This results was shown in Mashiah-Yaakovi (2015). Our proof based on the Martin function is, once again, short and straightforward.\smallskip

\noindent\textbf{[4]} We prove the existence of an \emph{$\ep$-solvable subgame}, for every $\ep>0$. That is, 
for every $\ep > 0$ there is a history such that, in the subgame defined by that history, an $\ep$-equilibrium exists.

In the specific case of the long-run average payoff, it is only the current state that matters when one considers a subgame. Therefore, for a finite state space, only finitely many essentially different subgames can arise for the long-run average payoff,
and hence the existence of an $\ep$-solvable subgame is equivalent to the existence of an initial state at which an $\ep$-equilibrium exists, for every $\ep > 0$.
Using this property, among others, the corresponding existence result for the long-run average payoff was shown in Thuijsman and Vrieze (1991) when there are only two players (they used the term \emph{easy initial state}), and in Vieille (2000c) for more than two players (who introduced the term \emph{solvable state}). 

Our proof, in addition to using the Martin function, requires techniques to detect deviations of the players from non-stationary strategies.
To this end we use a recent result by Alon, Gunby, He, Shmaya, and Solan (2022),
which was 
also 
used in Flesch and Solan (2022b) in an alternative proof 
for the existence of an $\ep$-equilibrium in multiplayer repeated games with tail-measurable payoffs.
As far as we know, the existence of $\ep$-solvable 
subgames is the first theorem 
where the use of the result of Alon, Gunby, He, Shmaya, and Solan (2022) is imperative for a proof.

Our result connects and gives a very partial answer to the long-standing open problem whether every multiplayer stochastic game with 
finite action sets, 
finite or countably infinite state space,
and bounded and Borel-measurable payoffs admits an $\ep$-equilibrium, for every $\ep>0$. As mentioned earlier, a subgame-perfect $\ep$-equilibrium does not always exist, so in some stochastic games there is no strategy profile that would induce an $\ep$-equilibrium in all subgames simultaneously.\medskip

We remark that none of our existence results [1]--[4] holds for $\ep=0$; there are various counter-examples even in the context of two-player zero-sum stochastic games with the long-run average payoffs.\medskip

\noindent\textbf{Organisation of the paper.} 
The model of stochastic games is described in Section~\ref{section:model}.
The concept of the Martin function is defined in Section~\ref{section:martin}.
The four existence results are presented and proven, by applying the Martin function, in Section~\ref{section:application}.
Section~\ref{section:discussion} concludes.

\section{The Model}
\label{section:model}

Let $\dN=\{1,2,\ldots\}$. For a nonempty finite or countably infinite
set $X$, let $\Delta(X)$ denote the set of probability distributions on $X$.

\begin{definition}
\label{def:stochastic:game}
A \emph{stochastic game}\footnotemark\,is a tuple $\Gamma = (I, S, (A_i)_{i\in I}, p, (f_i)_{i\in I})$,
where
\begin{itemize}
\item   $I$ is a nonempty finite set of players.
\item   $S$ is a nonempty finite or countably infinite set of states.
\item   $A_i$ is a nonempty finite set of actions of player $i$, for each $i \in I$. Let $A := \prod_{i \in I} A_i$ denote the set of action profiles.
\item   $p : S\times A \to \Delta(S)$ is the transition function. For states $s,s'\in S$ and action profile $a\in A$, we denote by $p(s'\mid s,a)$ the probability of state $s'$ under $p(s,a)$.\smallskip\\
Let $\mathscr{R}$ denote the set of runs, i.e., all sequences $(s^1,a^1,s^2,a^2,\ldots)\in (S\times A)^\infty$ such that $p(s^{n+1}\mid s^n,a^n)>0$ for each $n\in\dN$. 
We endow the set $(S\times A)^\infty$ with the product topology, where the finite sets $S$ and $A$ have their natural discrete topologies, and then we endow the set $\mathscr{R}$ of runs, which is a closed subset of $(S\times A)^\infty$, with the subspace topology. We denote by $\mathcal{B} (\mathscr{R})$ the corresponding Borel sigma-algebra on $\mathscr{R}$. 
\item   $f_i : \mathscr{R} \to \dR$ is a bounded and Borel-measurable payoff function for player~$i$, for each $i \in I$.
\end{itemize}
\end{definition}
\footnotetext{Since the payoff function is not derived from stage payoffs, this model is also called a \emph{multi-stage stochastic game}.}

The game is played in stages in $\dN$. 
The play starts in a given initial state $s^1 \in S$.
In each stage $n \in \dN$ the play is in some state $s^n\in S$
and the players simultaneously choose actions; denote by $a_i^n$ the action selected by player~$i$. 
This induces an action profile $a^n=(a_i^n)_{i\in I}$, which is observed by all players. Then, the state $s^{n+1}$ for stage $n+1$ is drawn from the distribution $p(\cdot\mid s^n,a^n)$ and is observed by all players.

\begin{comment}\rm
In our model, the action sets $(A_i)_{i \in I}$ are independent of the state. 
This assumption is used to simplify the exposition, and all the statements and proofs in the paper can be extended to stochastic games in which the action sets are finite yet depend on the state.
\end{comment}

\begin{comment}\label{undisc}\rm 
In the literature, several payoff functions are derived from rewards that the players receive in each stage of the game. One of the most studied such payoff function is the long-run average payoff. Given a function $z_i : S \times A \to \dR$ for player $i$ that assigns a reward $z_i(s,a)$ to each state $s \in S$ and each action profile $a\in A$, player $i$'s long-run average payoff is defined as
\[ f_i(s^1,a^1,\dots)\, :=\, \limsup_{n \to \infty} \frac{1}{n}\sum_{k=1}^n z_i(s^k,a^k). \]
\end{comment}\medskip

\noindent\textbf{Histories.} A \emph{history} in stage $n\in\dN$ is a sequence $(s^1,a^1,\ldots,s^{n-1},a^{n-1},s^n)\in (S\times A)^{n-1}\times S$ such that $p(s^{k+1}\mid s^k,a^k)>0$ for each $k=1,\ldots,n-1$. The set of histories in stage $n$ is denoted by $H^n$ and the set of histories is denoted by $H := \bigcup_{n=1}^\infty H^n$.

The current stage (or length) of a history $h\in H^n$ is denoted  by $\len(h):=n$ and the final state of $h$ is denoted by $s_h$. 
For two histories $h,h'\in H$, we write $h\preceq h'$ if $h'$ extends $h$ (with possibly $h=h'$), and we write $h\prec h'$ if $h\preceq h'$ and $h\neq h'$. If the final state $s_h$ of $h$ coincides with the first state of $h'$, then we write $hh'$ for the concatenation of $h$ with $h'$. Similarly, for a history $h\in H$ and a run $r\in\mathscr{R}$, we write $h\prec r$ if $r$ extends $h$, and if the final state $s_h$ of $h$ coincides with the first state of $r$, then we write $hr$ for the concatenation of $h$ with $r$.

For every run $r = (s^1,a^1,s^2,a^2,\ldots)\in\mathscr{R}$ and every stage $n \in \dN$,
we denote by $r^n := (s^1,a^1,s^2,a^2,\ldots,s^n) \in H$ the prefix of $r$ in stage $n$.
\medskip

\noindent\textbf{Subgames.} 
Each history induces a \emph{subgame} of $\Gamma$. Given $h\in H$, the subgame that starts at $h$ is the game $\Gamma_h = (I,S,(A_i)_{i \in I},p, (f_{i,h})_{i \in I})$ having $s_h$ as the initial state, where $f_{i,h}(r) := f_i(hr)$ for each run $r\in\mathscr{R}$ that starts in state $s_h$.\medskip

\noindent\textbf{Mixed actions.}  
A \emph{mixed action} for player $i\in I$ is a probability distribution $x_i$ on $A_i$. The set of mixed actions for player $i$ in state $s$ is thus $\Delta(A_i)$.  
The probability that $x_i$ places on action $a_i\in A_i$ is denoted by $x_i(a_i)$. 

A \emph{mixed action profile} is a collection $x=(x_i)_{i\in I}$ of mixed actions, one for each player. For a player $i\in I$, a mixed action profile of her opponents
is a collection $x_{-i}=(x_j)_{j\in I\setminus\{i\}}$ of mixed actions. 

The \emph{support} of the mixed action $x_i$ is $\supp(x_i):=\{a_i\in A_i\colon x_i(a_i)>0\}$. The support of a mixed action profile $x=(x_i)_{i\in I}$ is $\supp(x):=\prod_{i \in I} \supp(x_i)\subseteq A$.

For a mixed action profile $x=(x_i)_{i\in I}$, we denote the probability of moving from state $s$ to state $s'$ under $x$ by
\[p(s'\mid s,x)\,:=\,\sum_{a=(a_i)_{i\in I}\in A}\,\left( p(s'\mid s,a)\cdot\prod_{i\in I}\, x_i(a_i)\right).\]

\noindent\textbf{Strategies.}  
A (behavior) \emph{strategy} of player~$i$ is a function $\sigma_i : H \to \Delta(A_i)$. We denote by $\sigma_i(a_i \mid h)$ the probability on the action $a_i\in A_i$
under $\sigma_i(h)$. The interpretation of $\sigma_i$ is that if history $h$ arises, then $\sigma_i$ recommends to select an action according to the mixed action  $\sigma_i(h)$. We denote by $\Sigma_i$ the set of strategies of player~$i$.

The \emph{continuation of a strategy $\sigma_i$} in the subgame that starts at a history $h\in H$ is denoted by $\sigma_{i,h}$; 
this is a function that maps each history $h'$ having $s_h$ as its first state to the mixed action $\sigma_{i,h}(h'):=\sigma_i(hh')\in\Delta(A_i)$.

A \emph{strategy profile} is a collection $\sigma=(\sigma_i)_{i\in I}$ of strategies, one for each player. We denote by $\Sigma$ the set of strategy profiles. For a player $i\in I$, the strategy profile of her opponents is a collection $\sigma_{-i}=(\sigma_j)_{j\in I\setminus \{i\}}$ of strategies. We denote by $\Sigma_{-i}$ the set of strategy profiles of player $i$'s opponents.\medskip

\noindent\textbf{Expected payoffs.}
By Kolmogorov's extension theorem, each strategy profile $\sigma$ together with an initial state $s$ induces a unique probability measure $\P_{s,\sigma}$ on $(\mathscr{R},\mathcal{B} (\mathscr{R}))$. The corresponding expectation operator is denoted by $\E_{s,\sigma}$. 
Player $i$'s \emph{expected payoff} under the strategy profile $\sigma$ is
\[\E_{s,\sigma}[f_i]\,=\,\int_{r\in\mathscr{R}}f_i(r)\ \P_{s,\sigma}(dr).\]

Given a history $h\in H$, 
in the subgame $\Gamma_h$, each strategy profile $\sigma$ similarly induces a unique probability measure $\P_{h,\sigma}$ on $(\mathscr{R},\mathcal{B} (\mathscr{R}))$.
The corresponding expectation operator is denoted by $\E_{h,\sigma}$.  Player $i$'s expected payoff under strategy profile $\sigma$ in $\Gamma_h$ is
\[\E_{h,\sigma}[f_i]\,=\,\int_{r\in\mathscr{R}}f_{i,h}(r)\ \prob_{h,\sigma}(dr).\]

\noindent\textbf{Minmax value and maxmin value.}
The \emph{minmax value} of player $i\in I$ for the initial state $s\in S$ is the quantity
\begin{equation}
\label{def-minmax}
\overline v_i(s)\,:=\,\inf_{\sigma_{-i}\in\Sigma_{-i}}\sup_{\sigma_i\in\Sigma_i}\E_{s,\sigma_i,\sigma_{-i}}[f_i].
\end{equation}
Intuitively, $\overline v_i(s)$ is the highest payoff that player $i$ can defend against any strategy profile of her opponents. The minmax value $\overline v_i(h)$ of player $i$ in the subgame that starts at history $h$ is defined analogously.

The \emph{maxmin value} of player $i\in I$ for the initial state $s\in S$ is the quantity
\begin{equation}
\label{def-maxmin}
\underline v_i(s)\,:=\,\sup_{\sigma_i\in\Sigma_i}\inf_{\sigma_{-i}\in\Sigma_{-i}}\E_{s,\sigma_i,\sigma_{-i}}[f_i].
\end{equation}
Intuitively, $\underline v_i(s)$ is the highest payoff that player $i$ can guarantee to receive regardless of the strategy profile of her opponents. The maxmin value $\underline v_i(h)$ of player $i$ in the subgame that starts at history $h$ is defined analogously. 
Note that $\overline v_i(s)\geq\underline v_i(s)$ for each player $i\in I$ and each state $s\in S$, and analogous inequalities hold in the subgame at each history $h\in H$.
\medskip

\noindent\textbf{Equilibrium.}
Let $\ep\geq 0$. A strategy profile $\sigma^*$ is called an \emph{$\ep$-equilibrium for the initial state $s\in S$}, if we have $\E_{s,\sigma^*}[f_i] \,\geq\, \E_{s,\sigma_i,\sigma^*_{-i}}[f_i] - \ep$ for each player $i\in I$ and each strategy $\sigma_i \in \Sigma_i$. It follows from the definitions that if $\sigma^*$ is an $\ep$-equilibrium for the initial state $s\in S$, then $\E_{s,\sigma^*}[f_i]\geq \overline v_i(s)-\ep$ for each player $i\in I$. A strategy profile $\sigma$ is called an \emph{$\ep$-equilibrium} if it is an $\ep$-equilibrium for each initial state $s\in S$.

\section{Martin's Function}
\label{section:martin}

Martin (1998) and Maitra and Sudderth (1998) showed that, in every two-player zero-sum stochastic game with a bounded and Borel-measurable payoff function, it is possible to assign an auxiliary one-shot zero-sum game to each history, with the action sets $A_1$ and $A_2$ for the players, in such a way that if at all histories a player plays well in the corresponding one-shot game, then she plays well in the stochastic game too. This is a powerful result, as it shows how to play well in the zero-sum stochastic game, by decomposing the infinite duration game into suitable one-shot games. The purpose of this section is to extend this result to non-zero-sum stochastic games, with possibly more than two players.\medskip

\noindent\textbf{The auxiliary one-shot games.} Let $\Gamma = (I,S,(A_i)_{i \in I}, p,(f_i)_{i \in I})$ be a stochastic game. Suppose that we are given a function $D=(D_i)_{i\in I}:H\to \dR^{|I|}$. That is, at each history $h\in H$, the function $D$ specifies a number $D_i(h)$ for each player $i\in I$.

The function $D$ induces the following one-shot\footnote{The subscript $O$ is intended to remind the reader that the game under consideration is a one-shot game.} game $G_O(D,h)$ at each history $h\in H$. 
The action set of each player $i\in I$ is $A_i$, and the payoff of each player~$i\in I$ under each action profile $a\in A$ is equal to the expectation of $D_i$ at the next history:
\[\E[D_i\mid h,a]\,:=\,\sum_{s\in S}\bigl(p(s\mid s_h,a)\cdot D_i(h,a,s)\bigr).\] 
For each mixed action profile $x\in\prod_{i\in I}\Delta(A_i)$, we denote by $\E[D_i\mid h,x]$ the expectation of player $i$'s payoff under $x$:
\[\E[D_i\mid h,x]\,:=\,\sum_{a=(a_j)_{j\in I}\in A}\,\left(\E[D_i\mid h,a]\cdot\prod_{j\in I}x_j(a_j)\right).\]
In the one-shot game $G_O(D,h)$, player $i$'s \emph{minmax value} is the quantity
\[ \overline v_{O,i}(D,h) \,:=\, \min_{x_{-i} \in \prod_{j \neq i} \Delta(A_j)} \max_{x_i \in \Delta(A_i)} \E[D_i\mid h,x_i,x_{-i}], \]
and player~$i$'s \emph{maxmin value} is the quantity
\begin{equation}\label{maxmin-oneshot}
\underline v_{O,i}(D,h) \,:=\, \max_{x_i \in \Delta(A_i)} \min_{x_{-i} \in \prod_{j \neq i} \Delta(A_j)} \E[D_i\mid h,x_i,x_{-i}].
\end{equation}
Note that $\overline v_{O,i}(D,h)$ and $\underline v_{O,i}(D,h)$ are independent of $D_j$ with $j\neq i$. Therefore, sometimes we will only define $D_i$ and write $\overline v_{O,i}(D_i,h)$ and $\underline v_{O,i}(D_i,h)$.\medskip

We now state the main theorem of the section. 

\begin{theorem}\label{theorem:martin}
Let $\ep > 0$.
For each player $i \in I$
there is a bounded function $D^\ep_i : H \to \dR$, called a \emph{Martin function} for the parameter $\ep$ and player $i$, with the following properties:
\begin{enumerate}
\item $\overline v_i(h) - \ep \leq D^\ep_i(h)\leq \overline v_i(h)$, for every $h \in H$.
\item $D^\ep_i(h)\leq\overline v_{O,i}(D_i^\ep,h)$, for every $h \in H$.
\item Let $h'\in H$ be a history. If a strategy profile $\sigma \in \Sigma$ satisfies
\begin{equation}\label{submar}
\overline v_{O,i}(D^\ep_i,h)\,\leq\, \E[D^\ep_i\mid h,\sigma(h)],\ \quad\forall h \in H\text{ with }h'\preceq h,
\end{equation}
then
\[\overline v_{O,i}(D^\ep_i,h)\,\leq\, \E_{h,\sigma}[f_i],\ \quad\forall h \in H\text{ with }h'\preceq h. \]
\end{enumerate}
The result also holds when all references to the minmax value 
(both of the one-shot game and the stochastic game)
are replaced by the maxmin value.
\end{theorem}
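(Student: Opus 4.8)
The plan is to construct the Martin function $D_i^\ep$ for a fixed player $i$ by a transfinite induction that mirrors Martin's original construction, working with the auxiliary alternating-move game. First I would fix $\ep>0$ and, for each history $h$, consider the ``unfolding'' of the subgame $\Gamma_h$ into a game of perfect information in which, at each stage, the players' simultaneous choice of an action profile is replaced by a sequence of moves: player $i$ announces a mixed action $x_i\in\Delta(A_i)$ (approximated on a fixed finite $\ep$-grid of $\Delta(A_i)$ so that the move set is finite), then an adversary announces a mixed action profile $x_{-i}$ for the opponents (also on a finite grid), and then a chance move selects the next state according to $p(\cdot\mid s_h,(x_i,x_{-i}))$. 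The payoff in this auxiliary game is $f_{i,h}$, which is Borel, so by the Borel determinacy theorem (Martin 1975) the auxiliary game is determined; call its value $D_i^\ep(h)$ — this is the value of the game from the node just before player $i$'s move at history $h$. The grid is chosen fine enough, and the approximation argument standard enough, that one gets property~1: the maximizer (player $i$) can guarantee within $\ep$ of $\overline v_i(h)$ because any opponent profile in $\Gamma_h$ is within $\ep$ in payoff of one on the grid, and symmetrically $D_i^\ep(h)\le\overline v_i(h)$ because the adversary in the auxiliary game is at least as powerful as the opponents in $\Gamma_h$.

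Next I would prove property~2, which is the ``one-step'' consistency statement. By the determinacy of the auxiliary game and the structure of its first two moves, $D_i^\ep(h)$ equals $\max_{x_i}\min_{x_{-i}}$ of the value at the node after the chance move, and that value after the chance move is exactly $\E[D_i^\ep\mid h,x_i,x_{-i}]$ (the expectation over next states $s$ of $D_i^\ep(h,a,s)$, suitably interpreted through the mixed actions), because the auxiliary game from a node after a chance move starting at state $s'$ is the auxiliary game at the history $(h,a,s')$. Hence $D_i^\ep(h)=\max_{x_i}\min_{x_{-i}}\E[D_i^\ep\mid h,x_i,x_{-i}]=\underline v_{O,i}(D_i^\ep,h)\le\overline v_{O,i}(D_i^\ep,h)$, giving property~2; the same identity $D_i^\ep(h)=\underline v_{O,i}(D_i^\ep,h)$ is exactly what is needed for the maxmin version of the theorem, where property~2 becomes an equality-direction statement and property~3 uses the reverse inequality in \eqref{submar}.

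The heart of the argument, and the step I expect to be the main obstacle, is property~3: from the ``submartingale'' hypothesis \eqref{submar} along all extensions of $h'$, deduce $\overline v_{O,i}(D_i^\ep,h)\le\E_{h,\sigma}[f_i]$ along all such extensions. The idea is that \eqref{submar} together with property~2 makes the process $n\mapsto D_i^\ep(R^n)$ a bounded submartingale under $\P_{h,\sigma}$ for each $h\succeq h'$, where $R$ denotes the random run; by the martingale convergence theorem it converges $\P_{h,\sigma}$-a.s.\ to some limit random variable, and $D_i^\ep(h)\le\E_{h,\sigma}[\,\lim_n D_i^\ep(R^n)\,]$. What remains is to identify the limit with $f_{i,h}$, or at least to bound it below by $f_{i,h}$ in expectation — this is the genuinely delicate point and is where Martin's technique really bites. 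Here I would argue as Martin does: use the hypothesis \eqref{submar} to have player $i$'s adversary in the auxiliary game, from position $h'$, commit to following the opponents' marginals prescribed by $\sigma$, while the auxiliary-game value process is pushed up by $\sigma(h)$ at player $i$'s own nodes; the determinacy value $D_i^\ep(h')$ is then a lower bound for what player $i$ secures against that adversary commitment, and tracing the auxiliary game to its (infinite) end identifies the secured quantity with $\E_{h',\sigma}[f_i]$ via the a.s.\ convergence and the fact that $f_{i,h'}$ is the auxiliary payoff. Running this same argument from every $h\succeq h'$ (the hypothesis \eqref{submar} is inherited by every such subgame) yields the conclusion for all $h\succeq h'$. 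The maxmin version is obtained verbatim by swapping the roles of player $i$ and the adversary in the auxiliary game, reversing the grid-approximation inequalities, and reversing \eqref{submar}.
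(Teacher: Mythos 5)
There is a genuine gap, and it sits exactly where you flag ``the main obstacle.'' Your construction defines $D^\ep_i(h)$ as the value of a grid-discretized unfolding of $\Gamma_h$ with payoff $f_{i,h}$, which makes $D^\ep_i$ nothing more than an $\ep$-perturbation of the value function itself. But the paper points out explicitly that setting $D^\ep_i(h)=\overline v_i(h)$ already satisfies properties~(1) and~(2) while property~(3) \emph{fails} in general; so no argument that only uses ``$D^\ep_i$ is within $\ep$ of the value and satisfies the one-shot recursion'' can deliver~(3). Your sketch of~(3) does not supply the missing structure: it argues that player~$i$ can \emph{secure} $D^\ep_i(h')$ in the auxiliary game against an adversary who commits to the opponents' marginals of $\sigma$, but property~(3) is a statement about an \emph{arbitrary} profile $\sigma$ satisfying the local condition \eqref{submar} --- in the applications $\sigma(h)$ is a one-shot equilibrium, not maxmin play by player~$i$ --- and its conclusion bounds $\E_{h,\sigma}[f_i]$ under $\sigma$ itself, not a guarantee of player~$i$. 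The genuinely hard step, identifying (or bounding below) the a.s.\ limit of the bounded submartingale $n\mapsto D^\ep_i(r^n)$ by the Borel payoff $f_i(r)$, is asserted rather than proved; for a general Borel $f_i$ there is no relation between $\lim_n D^\ep_i(r^n)$ and $f_i(r)$ unless $D^\ep_i$ is built with the specific ``promise/threshold'' bookkeeping of Martin's auxiliary perfect-information games, which is precisely what gives the $\ep$-slack in property~(1) its role. A secondary problem: Borel determinacy (Martin 1975) covers perfect-information games \emph{without} chance moves, whereas your unfolded game has a chance move each stage; its determinacy is essentially an instance of the Blackwell-game determinacy you are trying to establish, so the appeal is close to circular. (Minor: in the maxmin version the inequalities in \eqref{submar} are not reversed; only $\overline v_{O,i}$ is replaced by $\underline v_{O,i}$.)

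For comparison, the paper does not construct $D^\ep_i$ from scratch: it invokes Martin (1998) and Maitra--Sudderth (1998) for the two-player single-state case and the extension to countable state spaces, and Theorem~3.6 and Claim~3.5 of Ashkenazi-Golan, Flesch, Predtetchinski, and Solan (2022a) for the multiplayer case, with a small modification of their ``re-initiation'' step to obtain the left-hand inequality in property~(1). If you want a self-contained proof, you would have to reproduce that machinery: define $D^\ep_i(h)$ via auxiliary perfect-information games in which player~$i$ announces one-shot payoff promises consistent with a running threshold and the opponent tests them, and derive property~(3) from the tester's optimal strategies in those games; the naive unfolding with payoff $f_{i,h}$ does not encode this information.
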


Theorem~\ref{theorem:martin} states that, for each $\ep>0$ and each player $i$, there is a Martin function $D^\ep_i$ that assigns a real number $D^\ep_i(h)$ to each history $h$ with the following properties: 
(1) $D^\ep_i(h)$ is not more than player $i$'s minmax value $\overline{v}_i(h)$ at history $h$, 
and not less than this quantity up to $\ep$,
(2) the minmax value $\overline v_{O,i}(D_i^\ep,h)$ of the one-shot game $G_O(D^\ep_i,h)$, which is induced by $D_i^\ep$ at history $h$, is at least $D^\ep_i(h)$, 
and (3) 
any strategy profile $\sigma$ that is good locally, is also good globally; namely, if for every $h$ that extends $h'$, 
in the one-shot game $G_O(D^\ep_i,h)$ the mixed action profile $\sigma(h)$ yields to player~$i$ an expected payoff of at least $\overline v_{O,i}(D_i^\ep,h)$, then, in the stochastic game, for each history $h$ extending $h'$ player $i$'s expected payoff in the subgame at $h$ is also at least $\overline v_{O,i}(D^\ep_i,h)$. This last property is remarkably useful: if a player plays well in the one-shot game at each history, then she plays well in the stochastic game too.

Note that if we set $D_i^\ep(h) = \overline v_i(h)$,
then properties~(1) and (2) hold, yet property~(3) does not necessarily hold.
To ensure that property~(3) holds as well, we have to set $D_i^\ep(h)$ slightly lower than $\overline v_i(h)$,
as expressed by property~(1). 

Martin (1998) proved the existence of a function $D_i^\ep$ that satisfies the right-hand-side inequality in property (1) and satisfies properties~(2) and (3) of Theorem~\ref{theorem:martin} in two-player games 
(where the minmax value and the maxmin value coincide)
with a single state: $|S|=1$. Ashkenazi-Golan, Flesch, Predtetchinski, and Solan (2022a) showed how to extend the same properties to multiplayer games with a single state. Indeed, Theorem 3.6 of their paper claims properties (2) and (3) explicitly (they are called conditions (M.1) and (M.3) respectively), whereas Claim 3.5 of their paper implies the right-hand-side inequality of property (1). The left-hand-side inequality in property (1) follows by a slight modification in the proof of Theorem 3.6 of their paper: in the proof, the event called ``re-initiation'' should take direct effect, and not only from the next stage.\footnote{More precisely, following the notation in the proof of Theorem 3.6: The definition of the function $d$ needs to be changed as follows. When $h$ is a history beyond stage 1, then set $d(h)=\widehat D_{\alpha(h)}(h)$. This definition ensures that the value of $d$ cannot drop below the minmax value minus $\delta$.} By doing so, we obtain properties (1)--(3) for multiplayer games with a single state. The extension to any finite or countably infinite state space can be done in the same way as how Maitra and Sudderth (1998) extended the result of Martin (1998) to such a state space.

\begin{example}\rm 
\label{example:mn}
Suppose that $|I|=2$, and the payoff function is the long-run average payoff, cf.~Remark \ref{undisc}.
In their study of the uniform value in two-player zero-sum stochastic games,
Mertens and Neyman (1981) constructed for each $\ep > 0$ and $i \in I$ a function $D^\ep_i$ that satisfies the conditions of Theorem~\ref{theorem:martin} --
this is the function that they denote by $Y_i$ on Page 56 of their paper.

When $|I| > 2$ yet the payoff function is still the long-run average payoff, 
Neyman (2003) provided an analogous construction for the function $D^\ep_i$, which he denotes by $Y_k$ on Page 184 of his paper. $\Diamond$
\end{example}

\section{Applications}
\label{section:application}

In this section we present four applications of Theorem~\ref{theorem:martin}.
In Section~\ref{section:minmax} we show that each player has a subgame $\ep$-maxmin strategy, for every $\ep>0$, providing a short proof for a result due to Mashiah-Yaakovi (2015), see also Flesch, Herings, Maes, and Predtetchinski (2021) for the case of two-player games.
In Section~\ref{section:acceptable} we show that
there is an $\ep$-acceptable strategy profile, 
for every $\ep>0$,
extending an implication of Solan and Vieille (2002) to general Borel-measurable payoff functions.
In Section~\ref{section:extensive} we show the existence of an extensive-form correlated $\ep$-equilibrium,
for every $\ep>0$,
providing a short proof for a result due to Mashiah-Yaakovi (2015),
which itself extends a result of Solan and Vieille (2002) to general Borel-measurable payoff functions.
Finally, in Section~\ref{section:easy} we show that for every $\ep > 0$ there is a subgame of the stochastic game in which there is an $\ep$-equilibrium,
extending a result of Vieille (2000c) to general Borel-measurable payoff functions.

\subsection{ Subgame $\ep$-Maxmin Strategies}
\label{section:minmax}

A strategy of player~$i\in I$ is called subgame $\ep$-maxmin
if in each subgame it guarantees that player~$i$'s expected payoff is at least her maxmin value up to $\ep$.

\begin{definition} Let $\ep\geq 0$. 
A strategy $\sigma^*_i \in \Sigma_i$ for player $i\in I$ is 
\emph{subgame $\ep$-maxmin} if for every history $h \in H$ and every strategy profile $\sigma_{-i}\in\Sigma_{-i}$ 
\[ \E_{h,\sigma^*_i,\sigma_{-i}}[f_i] \,\geq\, \underline v_i(h) - \ep. \]
\end{definition}

The following theorem, although not explicitly stated,
is proven in Mashiah-Yaakovi (2015),
see also Flesch, Herings, Maes, and Predtetchinski (2021) for the case of only two players.

\begin{theorem}
In every stochastic game, for every $\ep > 0$, every player $i \in I$ has a subgame $\ep$-maxmin strategy.
\end{theorem}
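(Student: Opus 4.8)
The plan is to apply Theorem~\ref{theorem:martin} in its maxmin form to construct, for each player $i$ and each $\ep>0$, a strategy $\sigma_i^*$ that is locally optimal in every auxiliary one-shot game, and then invoke property~(3) to conclude that it is globally good in every subgame. Concretely, fix $\ep>0$ and a player $i$. Let $\underline D_i^{\ep}:H\to\dR$ be a Martin function for the maxmin value, so that by the maxmin analogue of property~(1) we have $\underline v_i(h)-\ep\le \underline D_i^{\ep}(h)\le \underline v_i(h)$ for every $h\in H$, by property~(2) we have $\underline D_i^{\ep}(h)\le \underline v_{O,i}(\underline D_i^{\ep},h)$, and property~(3) holds with minmax replaced by maxmin.

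The key step is the construction of $\sigma_i^*$. For each history $h\in H$, consider the one-shot game $G_O(\underline D_i^{\ep},h)$. By definition of the one-shot maxmin value in \Eqref{maxmin-oneshot}, there exists a mixed action $x_i^{h}\in\Delta(A_i)$ achieving the outer maximum, i.e.
\[
\min_{x_{-i}\in\prod_{j\ne i}\Delta(A_j)}\E[\underline D_i^{\ep}\mid h,x_i^{h},x_{-i}]\,=\,\underline v_{O,i}(\underline D_i^{\ep},h).
\]
Define $\sigma_i^*(h):=x_i^{h}$. Then for \emph{any} strategy profile $\sigma_{-i}\in\Sigma_{-i}$ of the opponents, and any history $h$, the profile $\sigma=(\sigma_i^*,\sigma_{-i})$ satisfies
\[
\underline v_{O,i}(\underline D_i^{\ep},h)\,\le\, \E[\underline D_i^{\ep}\mid h,\sigma(h)],
\]
because fixing $\sigma_i^*(h)=x_i^{h}$ and letting the opponents play $\sigma_{-i}(h)$ can only yield at least the value of the inner minimum over all $x_{-i}$. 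Thus the hypothesis of property~(3) (in its maxmin form) is met for \emph{every} choice of $h'$, in particular for every $h'\in H$.

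Applying property~(3) with $h'=h$, we obtain, for every history $h\in H$ and every $\sigma_{-i}\in\Sigma_{-i}$,
\[
\E_{h,\sigma_i^*,\sigma_{-i}}[f_i]\,\ge\,\underline v_{O,i}(\underline D_i^{\ep},h)\,\ge\,\underline D_i^{\ep}(h)\,\ge\,\underline v_i(h)-\ep,
\]
where the second inequality is property~(2) and the third is property~(1). This is exactly the defining inequality of a subgame $\ep$-maxmin strategy, so $\sigma_i^*$ is as required, completing the proof. I do not anticipate a genuine obstacle here: the real content has been packaged into Theorem~\ref{theorem:martin}, and the only things to be careful about are (a) that a maximizer $x_i^h$ in \Eqref{maxmin-oneshot} exists, which holds because $\Delta(A_i)$ is compact and the payoff is continuous (indeed multilinear) in the mixed actions with $\underline D_i^{\ep}$ bounded, and (b) that the inequality $\underline v_{O,i}(\underline D_i^{\ep},h)\le \E[\underline D_i^{\ep}\mid h,\sigma_i^*(h),\sigma_{-i}(h)]$ is stated with the correct direction — it follows simply because the one-shot maxmin strategy guarantees its value against every opponent response. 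The mild subtlety worth a sentence in the write-up is that property~(3) must be invoked simultaneously for all $h'$, or equivalently that $\sigma_i^*$ is defined consistently at all histories so that the hypothesis of property~(3) holds along every subtree at once.
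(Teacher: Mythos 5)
Your proposal is correct and is essentially identical to the paper's own proof: it uses the maxmin version of Theorem~\ref{theorem:martin}, defines $\sigma_i^*(h)$ as a maxmin-optimal mixed action in each one-shot game $G_O(D_i^\ep,h)$, verifies the hypothesis of property~(3) against an arbitrary $\sigma_{-i}$, and then chains properties~(3), (2), and (1). The additional remarks on existence of the one-shot maximizer and on invoking property~(3) at every history are fine but add nothing beyond what the paper's argument already contains implicitly.
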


\begin{proof}
Fix $\ep > 0$ and $i \in I$.
Consider the version of Theorem~\ref{theorem:martin} for the maxmin value,
and let $D^\ep_i$ be the function for player $i$ given by this variation.
By Eq.~\eqref{maxmin-oneshot}, for each history $h \in H$, player $i$ has a mixed action $x_i(h)$ in the one-shot game $G_O(D_i^\ep,h)$ such that for every mixed action profile $x_{-i}$ of player $i$'s opponents we have
\[\E[D^\ep_i\mid h,x_i(h),x_{-i}]\,\geq\,\underline v_{O,i}(D^\ep_i,h).\]
Define a strategy $\sigma^*_i \in \Sigma_i$ for player $i$ by
letting $\sigma^*_i(h) = x_i(h)$ for each history $h\in H$. 
We will prove that $\sigma^*_i$ is subgame $\ep$-maxmin.

To this end, fix a strategy profile $\sigma_{-i} \in \Sigma_{-i}$ of player $i$'s opponents.
By the choice of $\sigma^*_i$, for every history $h\in H$,
\[ 
\E[D^\ep_i\mid h,\sigma^*_i(h),\sigma_{-i}(h)]
\,=\, 
\E[D^\ep_i\mid h,x_i(h),\sigma_{-i}(h)] \,\geq\, 
\underline v_{O,i}(D^\ep_i,h). \]
Thus, by properties~(3), (2), and (1) of Theorem~\ref{theorem:martin}, for every history $h\in H$,
\[ 
\E_{h,\sigma^*_i,\sigma_{-i}}[f_i] \,\geq\, \underline v_{O,i}(D^\ep_i,h)
\,\geq\,D^\ep_i(h)\,\geq\,\underline v_i(h) - \ep.\]
Hence, $\sigma^*_i$ is subgame $\ep$-maxmin, as claimed.
\end{proof}

\subsection{ Minmax $\ep$-Acceptable Strategy Profiles}
\label{section:acceptable}

A minimal requirement from a reasonable strategy profile 
is that every player obtains, up to a small error-term, an expected payoff of at least her minmax value. Indeed, such a strategy profile then induces, up to a small error-term, individually rational payoffs to the players. In the context of stochastic games with the long-run average payoff, Solan (2018) proved that such a strategy profile exists by applying the results of Solan and Vieille (2002).

In this section, we consider a stronger version of this concept, where this minimal condition is required to hold in all subgames.

\begin{definition}
A strategy profile $\sigma^* \in \Sigma$ is \emph{minmax $\ep$-acceptable} if for every player $i\in I$ and every history $h\in H$
\[ \E_{h,\sigma^*}[f_i] \,\geq\, \overline v_i(h) - \ep. \]
\end{definition}

A priori, it is not clear whether every stochastic game admits a minmax $\ep$-acceptable strategy profile. Indeed, as discussed in the Introduction, every subgame-perfect $\ep$-equilibrium in the game is automatically minmax $\ep$-acceptable, but a subgame-perfect $\ep$-equilibrium does not always exist, as was shown by a counter-example in Flesch, Kuipers, Mashiah-Yaakovi, Schoenmakers, Shmaya, Solan, and Vrieze (2014). 

\begin{theorem}
\label{theorem:acceptable}
Let $\ep>0$. For each player $i \in I$, let $D_i^\ep$ be a Martin function as in Theorem~\ref{theorem:martin} (for the version with the minmax value). Let $D^\ep=(D^\ep_i)_{i\in I}$.
For each history $h \in H$, let $x(h)\in \prod_{i \in I} \Delta(A_i)$ be an equilibrium in the one-shot game
$G_O(D^\ep,h)$. 

Define a strategy profile $\sigma^*$ by letting $\sigma^*(h)=x(h)$ for each history $h\in H$. Then, the strategy profile $\sigma^*$ is minmax $\ep$-acceptable.

Consequently, in every stochastic game, for every $\ep>0$, there exists a minmax $\ep$-acceptable strategy profile.
\end{theorem}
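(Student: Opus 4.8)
The plan is to verify the hypothesis of property~(3) of Theorem~\ref{theorem:martin} simultaneously for every player $i \in I$, using the single strategy profile $\sigma^*$ constructed from the one-shot Nash equilibria $x(h)$. First I would recall that a Nash equilibrium $x(h)$ in the finite one-shot game $G_O(D^\ep,h)$ exists for every $h$ by Nash's theorem, since each $A_i$ is finite and the payoffs $\E[D_i^\ep \mid h, \cdot]$ are multilinear in the mixed actions; this is what makes the definition of $\sigma^*$ legitimate. Fix a player $i \in I$. Because $x(h)$ is a Nash equilibrium of $G_O(D^\ep,h)$, player $i$ cannot profit by a unilateral deviation, so in particular $\E[D_i^\ep \mid h, x(h)] \geq \max_{x_i \in \Delta(A_i)} \E[D_i^\ep \mid h, x_i, x_{-i}(h)] \geq \min_{x_{-i}} \max_{x_i} \E[D_i^\ep \mid h, x_i, x_{-i}] = \overline v_{O,i}(D_i^\ep,h)$, where the middle inequality holds because fixing the opponents at $x_{-i}(h)$ can only lower the value of the inner maximization relative to the worst case over $x_{-i}$. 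Hence $\overline v_{O,i}(D_i^\ep,h) \leq \E[D_i^\ep \mid h, \sigma^*(h)]$ for every $h \in H$, which is exactly condition~\eqref{submar} with $h'$ taken to be the trivial one-stage history (equivalently, the condition holds for all $h \in H$ with no restriction).

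Next I would invoke property~(3) of Theorem~\ref{theorem:martin}: applying it with $h'$ equal to any history for which condition~\eqref{submar} is satisfied downstream — and we have just shown it is satisfied at every history — yields $\overline v_{O,i}(D_i^\ep,h) \leq \E_{h,\sigma^*}[f_i]$ for every $h \in H$. Combining this with properties~(2) and (1) of Theorem~\ref{theorem:martin}, we get for every history $h \in H$ the chain
\[ \E_{h,\sigma^*}[f_i] \,\geq\, \overline v_{O,i}(D_i^\ep,h) \,\geq\, D_i^\ep(h) \,\geq\, \overline v_i(h) - \ep. \]
Since $i \in I$ was arbitrary, this shows $\sigma^*$ is minmax $\ep$-acceptable, proving both the displayed claim of the theorem and its ``Consequently'' clause.

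The step that deserves the most care — and which I expect to be the main (mild) obstacle — is making sure that the \emph{same} profile $\sigma^*$ works for all players at once. The subgame $\ep$-maxmin argument of the previous subsection only had to control one player, so it could use that player's individually optimal (maxmin) mixed action at each history; here we instead need a profile that is, locally at each history, good for \emph{every} player simultaneously against itself, and the Nash equilibrium $x(h)$ of $G_O(D^\ep,h)$ is precisely the object that delivers this — each player's equilibrium payoff is at least her one-shot minmax value $\overline v_{O,i}(D_i^\ep,h)$, since she can always fall back on a minmax-defending action. The rest is bookkeeping: property~(3) is stated per-player, so one applies it once for each $i$, always with the profile $\sigma^*$, and the conclusions assemble with no interaction between players. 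No measurability or selection subtleties arise beyond what is already absorbed into Theorem~\ref{theorem:martin}, since $\sigma^*$ is defined pointwise on $H$ and $H$ is countable.
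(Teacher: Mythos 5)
Your proposal is correct and follows essentially the same route as the paper: use the fact that at each history the one-shot Nash equilibrium gives every player at least her one-shot minmax value, verify condition~\eqref{submar} for all histories, and then chain properties~(3), (2), and (1) of Theorem~\ref{theorem:martin}. The only difference is that you spell out the elementary step ``Nash payoff $\geq$ one-shot minmax value'' and the existence of $x(h)$ via Nash's theorem, which the paper leaves implicit.
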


\begin{proof}
Since $\sigma^*(h)=x(h)$ is an equilibrium in the one-shot game
$G_O(D^\ep,h)$ for each history $h\in H$, we have for each history $h \in H$ and each player $i\in I$
\[ \E[D^\ep_i\mid h,\sigma^*(h)] \,\geq\, \overline v_{O,i}(D^\ep_i,h). \]
Hence, for each history $h\in H$ and each player $i\in I$,
\begin{equation}\label{lowerpayoff}
\E_{h,\sigma^*}[f_i] \,\geq\, \overline v_{O,i}(D_i^\ep,h) \,\geq\, D^\ep_i(h)\,\geq\,\overline v_i(h) - \ep,
\end{equation}
where the inequalities hold respectively by properties~(3), (2), and (1) of Theorem~\ref{theorem:martin}.
\end{proof}

\subsection{ Extensive-Form Correlated $\ep$-Equilibria}
\label{section:extensive}

An extensive-form correlated $\ep$-equilibrium is an $\ep$-equilibrium in an extended game, 
which includes a mediator, who sends a private message to each player at every stage.
Solan and Vieille (2002) proved the existence of an extensive-form correlated $\ep$-equilibrium, for every $\ep>0$, 
in all stochastic games with  finitely many states and
the long-run average payoff.
Mashiah-Yaakovi (2015) extended this result to all stochastic games with 
countable many states and
payoff functions that are bounded and Borel-measurable.
In this section we show how 
the result of Mashiah-Yaakovi (2015) 
follows from Theorem~\ref{theorem:martin}.

\begin{definition}
A stochastic game with a mediator is a triple $\Gamma^{M,\mu}=(\Gamma,(M_i)_{i\in I},(\mu_i)_{i\in I})$, where
\begin{itemize}
    \item $\Gamma = (I, S, (A_i)_{i\in I}, p, (f_i)_{i\in I})$ is a stochastic game as in Definition \ref{def:stochastic:game}.
    \item $M_i$ is a nonempty finite\footnote{A finite set of messages will suffice for our construction, so we do not have to consider measurable sets of messages.} set of \emph{messages} for player $i\in I$, for each $i\in I$. \smallskip\\
    Let $M := \prod_{i \in I} M_i$ denote the set of message profiles, let
\[HM \,:=\, \bigcup_{n \in \dN} \left(S \times (M \times A \times S)^{n-1}\right)\]
denote the set of \emph{histories for the mediator}, and
for each $i \in I$ let  
\[ HM_i \,:=\, \bigcup_{n \in \dN} \left(S \times (M_i \times A \times S)^{n-1}\times M_i\right) \]
denote the set of \emph{private histories for player~$i$}.
    \item $\mu_i:HM\to\Delta(M_i)$ is a function, for each $i\in I$. The collection $\mu=(\mu_i)_{i\in I}$ is called the \emph{(strategy of the) mediator}.\footnotemark
\end{itemize}
\end{definition}
\footnotetext{Our definition assumes that the signals to the players are conditionally independent given the history. 
In principle, the mediator can correlate the signals she sends to the players at each history.
Since we will not need such a correlation, we disregard it for the sake of clarity.}

The interpretation of a mediator is as follows:
In each stage $n \in \dN$, given the past history of play $(s^1,a^1,s^2,a^2,\dots,s^n)$ and given the past messages $m^1,m^2,\dots,m^{n-1}$ that the mediator already sent to the players,
the mediator randomly selects by using $\mu_i$ a private message $m^n_i$ to each player $i \in I$,
and sends it to that player.

A  (behavior) \emph{strategy of player~$i$ in $\Gamma^{M,\mu}$}
is a function $\tau_i : HM_i \to \Delta(A_i)$. Let $\mathcal{T}_i$ denote the set of strategies for player $i\in I$ in $\Gamma^{M,\mu}$. A strategy profile $\tau = (\tau_i)_{i \in I}$ in $\Gamma^{M,\mu}$ 
and a history $h \in HM$ induce a probability distribution $\prob_{h,\mu,\tau}$
on the space 
\[HM^\infty\,:=\,S \times (M \times A \times S)^\infty.\]
This is the probability distribution induced by $\tau$ and $\mu$ in the subgame of $\Gamma^{M,\mu}$
that starts at $h$.
Denote by $\E_{h,\mu,\tau}[\cdot]$ the corresponding expectation operator.

\begin{definition} Let $\ep\geq 0$. In a stochastic game with a mediator $\Gamma^{M,\mu}$, a strategy profile $\tau^*$ is an \emph{$\ep$-equilibrium} if
\[ \E_{s^1,\mu,\tau^*}[f_i] \,\geq\, \E_{s^1,\mu,\tau_i,\tau^*_{-i}}[f_i]-\ep,
\qquad\forall s^1 \in S,\, \forall i \in I,\, \forall \tau_i \in \mathcal{T}_i. \]
In a stochastic game $\Gamma$, an \emph{extensive-form correlated $\ep$-equilibrium}\footnotemark\  
is 
a triple $(M,\mu,\tau^*)$
where $\Gamma^{M,\mu}$ is a stochastic game with a mediator
and $\tau^*$ is an $\ep$-equilibrium 
in $\Gamma^{M,\mu}$.
\end{definition}
\footnotetext{The concept of extensive-form correlated $\ep$-equilibrium \emph{payoff} was defined and studied in the context of stochastic games by
Solan (2001) and Solan and Vieille (2002). 
We chose the definition provided here for simplicity.}

\begin{theorem}
In every stochastic game, for every $\ep > 0$, there exists an extensive-form correlated $\ep$-equilibrium.
\end{theorem}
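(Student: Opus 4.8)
The plan is to mimic the structure of the proof of Theorem~\ref{theorem:acceptable}, but now building the mediator so that it recommends, at each history, an action profile drawn from a \emph{correlated} equilibrium of the one-shot game $G_O(D^\ep,h)$, rather than a Nash equilibrium. Fix $\ep>0$, and for each player $i\in I$ let $D^\ep_i$ be a Martin function as in Theorem~\ref{theorem:martin} (for the minmax version), and set $D^\ep=(D^\ep_i)_{i\in I}$. For each history $h\in H$ of the stochastic game, the one-shot game $G_O(D^\ep,h)$ has a correlated equilibrium, i.e.\ a distribution $q(h)\in\Delta(A)$ such that no player $i$ can gain by unilaterally deviating from the action recommended to her. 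Since a Nash equilibrium is a correlated equilibrium, existence is immediate, but we will want the additional feature that, under a correlated equilibrium, each player's payoff when obeying is at least her one-shot minmax value $\overline v_{O,i}(D^\ep_i,h)$ (this holds because deviating to a constant best-reply against the marginal of the opponents' recommendation is one available deviation, and that guarantees at least the minmax value).

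Next I would set up the mediator. Take $M_i:=A_i$ for each $i$: at each stage $n$, having observed the history of play $h^n=(s^1,a^1,\dots,s^n)\in H$ (the mediator sees everything), the mediator draws $a^n\in A$ from $q(h^n)$ and sends to each player $i$ the component $m^n_i:=a^n_i$ as her private message. Formally this defines $\mu=(\mu_i)_{i\in I}$ (with the conditionally-independent-signals caveat handled by noting that we only need the marginals of $q(h^n)$ to define each $\mu_i$, while the joint law is what drives the transition — this is the one place where the ``in principle the mediator can correlate signals'' footnote is being used, so I would either invoke that remark or, cleanly, allow the mediator a single public randomization device at each stage and have the players' messages be deterministic functions of it; either route is routine). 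The candidate equilibrium $\tau^*$ is the obedient profile: $\tau^*_i$ plays the recommended action $m^n_i$ at every private history. Under $(\mu,\tau^*)$ the realized play has, at each history $h$, action profile distributed as $q(h)$, so $\E[D^\ep_i\mid h,\tau^*]=\E[D^\ep_i\mid h,q(h)]\geq\overline v_{O,i}(D^\ep_i,h)$ for every $h$ and every $i$, and in particular the ``obedient'' play of every player constitutes a profile $\sigma$ satisfying the hypothesis \eqref{submar} of property~(3). Hence $\E_{s^1,\mu,\tau^*}[f_i]\geq\overline v_{O,i}(D^\ep_i,s^1)\geq\overline v_i(s^1)-\ep$ for every $i$ and every initial state $s^1$, by properties (3),(2),(1) of Theorem~\ref{theorem:martin}.

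The substantive step is the upper bound on deviations: for any player $i$ and any $\tau_i\in\mathcal T_i$, show $\E_{s^1,\mu,\tau_i,\tau^*_{-i}}[f_i]\leq\overline v_i(s^1)$, so that combined with the previous paragraph $\tau^*$ is an $\ep$-equilibrium. The point is that when player $i$ deviates, the opponents still obey, hence at every stage $n$ they play the marginal of $q(h^n)$ on $A_{-i}$ \emph{given} the recommendation $m^n_i$ that player~$i$ received — and from player $i$'s perspective, whatever (possibly randomized, history-dependent) action she ends up playing, her conditional-expected gain in $D^\ep_i$ at stage $n$ is at most what obedience would give, precisely because $q(h^n)$ is a correlated equilibrium of $G_O(D^\ep,h^n)$: the correlated-equilibrium inequalities say exactly that no measurable re-mapping of the recommendation improves player~$i$'s one-shot payoff. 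Therefore, along the deviation, the process $D^\ep_i$ at the realized histories is a supermartingale (up to the usual care with the filtration generated by states, actions, and the messages player~$i$ has seen), so $\E_{s^1,\mu,\tau_i,\tau^*_{-i}}[D^\ep_i(h^n)]\leq D^\ep_i(s^1)\leq\overline v_i(s^1)$ for all $n$; passing to the limit via boundedness and a martingale-convergence / optional-stopping argument (this is where the analogue of property~(3) ``in the deviation direction'' is needed — i.e.\ the supermartingale version of the Martin-function estimate, which is exactly what the maxmin/minmax symmetry in Theorem~\ref{theorem:martin} plus the correlated-equilibrium inequalities buy us) gives $\E_{s^1,\mu,\tau_i,\tau^*_{-i}}[f_i]\leq\overline v_i(s^1)$.

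The main obstacle I anticipate is not the supermartingale computation itself but getting the information structure exactly right: the mediator observes the full history of play while each player observes only her own past messages and the public actions/states, so ``$G_O(D^\ep,h)$ at history $h$'' must be interpreted with $h$ being the true history, and the deviator's strategy $\tau_i$ is adapted to a coarser filtration; one must check that the correlated-equilibrium inequalities, which are stated pointwise at each $h$, still dominate the conditional expected gain of a deviation adapted to the player's private information (this follows by iterated expectations — averaging the pointwise CE inequalities over the deviator's information — but it needs to be spelled out). A secondary, minor obstacle is the conditional-independence restriction in the definition of the mediator; as noted above this is handled by the paper's own footnote allowing genuine correlation, or by re-encoding the correlated recommendation through a public lottery.
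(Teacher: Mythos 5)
There is a genuine gap in your deviation analysis, and it is fatal to the route you chose. Your plan is to bound any deviation payoff by $\E_{s^1,\mu,\tau_i,\tau^*_{-i}}[f_i]\leq\overline v_i(s^1)$ and then compare with the obedience bound $\E_{s^1,\mu,\tau^*}[f_i]\geq\overline v_i(s^1)-\ep$. This cannot work: obedience itself is one admissible $\tau_i$, and the obedient payoff can be far above $\overline v_i(s^1)$ (the one-shot equilibria of $G_O(D^\ep,h)$ are in no way minmaxing profiles against player~$i$; they typically give every player strictly more than her minmax value). So the claimed upper bound is false in general, and your two estimates are mutually inconsistent. The supermartingale argument that is supposed to deliver it also breaks at the first step: the correlated-equilibrium inequalities compare the deviator's one-shot $D^\ep_i$-payoff to the \emph{obedient} one-shot payoff $\E[D^\ep_i\mid h,q(h)]$, and by property~(2) of Theorem~\ref{theorem:martin} that obedient payoff is at least $\overline v_{O,i}(D^\ep_i,h)\geq D^\ep_i(h)$ --- i.e.\ the process drifts \emph{up}, not down, so $D^\ep_i(h^n)$ is not a supermartingale under a deviation, and $\E[D^\ep_i(h^n)]\leq D^\ep_i(s^1)$ fails. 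More fundamentally, no ``upper-bound analogue'' of property~(3) exists in the Martin-function machinery: property~(3) transfers local \emph{lower} bounds on the one-shot payoffs into a lower bound on $\E_{h,\sigma}[f_i]$, and $D^\ep_i$ is tethered to the minmax values, not to the realized payoff of a non-punishing play, so even a genuine bound on $\lim_n D^\ep_i(h^n)$ would say nothing about $f_i(r)$ from above.

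What is missing, and what the paper's proof supplies, is a detection-and-punishment mechanism. In the paper the mediator's message at stage $n$ contains the full recommended action profile of stage $n-1$, so any unilateral departure from the recommendation is detected by everyone one stage later; the deviator $i^*$ is then punished down to $\overline v_{i^*}(h)+\delta$ at the history $h$ where the deviation occurred. The correct comparison is then local, against the obedient continuation at the same history: by property~(1), the one-shot equilibrium property of $\sigma^*(h)$, and \eqref{lowerpayoff}/\eqref{ath}, the post-deviation payoff is at most $\E_{a_i,\sigma^*_{-i}(h)}[\overline v_i(h,a_i,a_{-i})]+\delta \leq \E_{h,\sigma^*}[f_i]+2\delta$, which bounds the \emph{gain} by $2\delta=\ep$ without ever claiming the deviation payoff is below $\overline v_i(s^1)$. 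If you want to salvage your write-up, replace the global supermartingale step by this punishment scheme (your choice of messages $M_i=A_i$ is too small for it; you need the messages to reveal past recommendations, as in $M_i=A\times A_i$).
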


\begin{proof}
The idea of the proof is as follows.
The players are supposed to follow an $(\ep/2)$-acceptable strategy profile.
To ensure that no player deviates, the mediator performs the lotteries for the players, and informs each player at every stage the action that was chosen for her.
In addition, the mediator reveals to \emph{all} players the actions she selected to everyone in the previous stage.
This mechanism ensures that a deviation is detected immediately and can be punished at the minmax level.

We turn to the formal proof.
Fix $\ep > 0$ and set $\delta := \ep/2$. 
For each player $i\in I$, let $D_i^\delta$ be the function given by Theorem~\ref{theorem:martin} (for the minmax value),
and let $D^\delta=(D_i^\delta)_{i\in I}$. For each $h \in H$,
let $x(h) \in \prod_{i \in I} \Delta(A_i)$ be an equilibrium in the one-shot game $G_O(D^\delta,h)$.

For each player $i \in I$, let $M_i := A \times A_i$.
Thus, the message sent to each player~$i$ at every stage will be a pair consisting of an action profile and an action for player~$i$. 

Now we define $\mu=(\mu_i)_{i\in I}$. Suppose that the current history is \[\widetilde h\,=\,(s^1,m^1,a^1,s^2,m^2,a^2,\dots,s^n) \in HM.\] 
At this history $\widetilde h$, the mediator randomly selects for each player $i\in I$ an action $\widehat a^n_i\in A_i$ according to the mixed action $x_i(s^1,a^1,s^2,a^2,\dots,s^n)$. Then, the mediator sends to player~$i$ the message $m^n_i=((\widehat a^{n-1}_j)_{j\in I},\widehat a^n_i)\in M_i$; for stage $n=1$ the first coordinate is irrelevant and is just some fixed action profile $\widehat a^0\in A$. The interpretation of $\widehat a^n_i$ is that the mediator recommends player~$i$ to play action $\widehat a^n_i$. That is, the message $m^n_i$ to player~$i$ consists of the actions that were recommended in the previous stage, and a recommended action for player~$i$ in the current stage. Formally, we define for each $i\in I$ the function 
$\mu_i : HM \to \Delta(M_i)$ as follows:
given a history $\widetilde h = (s^1,m^1,a^1,s^2,m^2,a^2,\dots,s^n) \in HM$ in $\Gamma^{M,\mu}$,
and denoting $m_i^{n-1} = \bigl( (\widehat a_j^{n-1})_{j \in I}, \widehat a_i^n\bigr)$ and $\widehat a^{n-1} := (\widehat a_j^{n-1})_{j \in I}$,
we let
\[ \mu_i(\widetilde h) \,:=\, \mathbf{1}_{\widehat a^{n-1}} \otimes x_i(s^1,a^1,s^2,a^2,\dots,s^n). \]

At the private history $\widetilde h_i = (s^1,m^1_i,a^1,s^2,m^2_i,a^2,\dots,s^n,m^n_i) \in HM_i$, if
$a^k_j = \widehat a^k_j$ holds for every player $j \in I$ and every stage $k \in \{1,2,\dots,n-1\}$, 
then all players followed the actions recommended by the mediator to them.
If this condition does not hold, 
denote by $k^*$ the first stage in which some player 
did not follow the action recommended by the mediator,
and by $i^*$ the minimal index of the player who did not follow the recommendation in stage $k^*$.
Denote by $h^* := (s^1,a^1,\dots,s^{k^*},a^{k^*},s^{k^*+1})\in H$ the history in the stage after the deviation occurs.
Note that $k^*$, $i^*$, and $h^*$ are all random variables that depend on the play of the game.

For each player $i\in I$, let $\tau^*_i : HM_i \to \Delta(A_i)$ be the strategy in the game with mediator that 
follows the recommendation of the mediator, unless some player deviates, whereupon the deviator is punished 
at her minmax value. That is:
\begin{itemize}
\item For every private history $\widetilde h_i = (s^1,m^1_i,a^1,s^2,m^2_i,a^2,\dots,s^n,m^n_i) \in HM_i$ along which no deviation from the recommendation of the mediator was made, $\tau^*_i(\widetilde h_i)$ follows the recommendation of the mediator: $\tau^*_i(\widetilde h_i)$ places probability 1 on the action $\widehat a^n_i$ where $m^n_i=((\widehat a^{n-1}_j)_{j\in I},\widehat a^n_i)$.
\item
Once a private history $\widetilde h_i = (s^1,m^1_i,a^1,s^2,m^2_i,a^2,\dots,s^n,m^n_i) \in HM_i$ occurs in which, based on the message $m^n_i$, player $i$ (and the other players too) notices a deviation in stage $k^*=n-1$ from the recommendation of the mediator, then all players $i \neq i^*$ switch to a punishment strategy profile against player~$i^*$,
namely,
a strategy profile that lowers player~$i^*$'s payoff to $\overline v_{i^*}(s^1,a^1,s^2,\dots,s^n) +\delta$.
\end{itemize}

We argue that $\tau^* = (\tau^*_i)_{i \in I}$ is an $\ep$-equilibrium in the game with mediator 
$\Gamma^{M,\mu}$. Notice that when all players follow their recommendations, namely, they adopt the strategy profile $\tau^*$,
the players in fact implement the minmax $\delta$-acceptable strategy profile $\sigma^*$ given in Theorem \ref{theorem:acceptable}. 

Suppose that some player, say player $i\in I$, considers deviating for the first time at the private history $\widetilde h_i = (s^1,m^1_i,a^1,s^2,m^2_i,a^2,\dots,s^n,m^n_i) \in HM_i$. The corresponding history in the stochastic game is then $h=(s^1,a^1,s^2,a^2,\dots,s^n)$. If player $i$ decides not to deviate, then the strategy profile $\sigma^*$ will be implemented, and hence player $i$'s expected payoff will be $\E_{h,\sigma^*}[f_i]$. 
By \Eqref{lowerpayoff},
\begin{equation}\label{ath}
\E_{h,\sigma^*}[f_i]\,\geq\,D^\delta_i(h).
\end{equation}

Suppose now that player~$i$ deviates at $\widetilde h_i$ from the mediator's recommendation and selects some other action, say $a_i\in A_i$. According to $\tau^*$, from the following stage and on, she will be punished at her minmax value plus $\delta$. That is, her payoff will be at most
\begin{eqnarray*}
\E_{a_i,\sigma^*_{-i}(h)}[\overline v_i(h,a_i,a_{-i})] + \delta
&\leq& \E_{a_i,\sigma^*_{-i}(h)}[D^\delta_i(h,a_i,a_{-i})] + 2\delta\\
&\leq& \E_{\sigma^*(h)}[D^\delta_i(h,a_i,a_{-i})] + 2\delta\\
&\leq&
\E_{h,\sigma^*}[f_i] + 2\delta,
\end{eqnarray*}
where the first inequality holds by property (1) of Theorem~\ref{theorem:martin};
the second inequality holds because the mixed action $\sigma^*(h)$ is an equilibrium of the one-shot game $G_O(D^\delta,h)$; and the third inequality holds because $\sigma^*$ chooses the mixed action $\sigma^*(h)$ at history $h$ and, if the action profile $(a_i,a_{-i})$ is chosen at history $h$, then from the history $(h,a_i,a_{-i})$ in the next period, $\sigma^*$ gives a payoff of at least $D^\delta_i(h,a_i,a_{-i})$ by \Eqref{ath}.

Thus, the deviation can improve player $i$'s payoff by at most $ \ep= 2\delta$. Hence, $\tau^*$ is indeed an $\ep$-equilibrium in the game with mediator 
$\Gamma^{M,\mu}$.
\end{proof}

\subsection{Solvable Subgames}
\label{section:easy}

A state $s \in S$ is called 
\emph{solvable}
(or \emph{easy})
if for every $\ep > 0$, the game has an $\ep$-equilibrium when the initial state is $s$.
Thuijsman and Vrieze (1991) proved that in every two-player non-zero-sum stochastic game with 
finitely many states and
the long-run average payoff there is an easy initial state.
This result has been extended by Vieille (2000c) to multiplayer stochastic games with 
finitely many states and 
the long-run average payoff.
In this section, 
we weaken the concept of easy initial state,
and define the concept of $\ep$-solvable 
subgame,
which is a subgame that admits an $\ep$-equilibrium. We then prove 
that for every $\ep > 0$ there is an $\ep$-solvable subgame.

\begin{definition}
Let $\ep >0$ 
and let $h\in H$ be a history.
The subgame $\Gamma_h$ is \emph{$\ep$-solvable} if there is an $\ep$-equilibrium in $\Gamma_h$.
\end{definition}

\begin{theorem}
\label{theorem:easy}
In every stochastic game, for every $\ep>0$, there is an $\ep$-solvable subgame.
\end{theorem}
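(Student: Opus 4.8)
The plan is to combine the minmax $\ep$-acceptable strategy profile from Theorem~\ref{theorem:acceptable} with a deviation-detection argument, so that at \emph{some} history along the play the acceptable profile is already self-enforcing. Fix $\ep>0$ and set $\delta:=\ep/3$, say. Let $\sigma^*$ be the $\delta$-acceptable strategy profile of Theorem~\ref{theorem:acceptable}, built from the Martin functions $D_i^\delta$ and the one-shot equilibria $x(h)$ in $G_O(D^\delta,h)$. Under $\sigma^*$ we know $\E_{h,\sigma^*}[f_i]\ge D_i^\delta(h)\ge\overline v_i(h)-\delta$ for every $h$ and $i$. The obstruction to $\sigma^*$ itself being an $\ep$-equilibrium in a given subgame $\Gamma_h$ is that a deviating player $i$ might gain: since actions are observed, after a one-shot deviation the others can punish $i$ down to roughly $\overline v_i+\delta$ at the next history, so the only profitable deviations are those that are \emph{undetectable} at the one-shot level — i.e.\ where player $i$ puts weight outside $\supp(x_i(h))$ but this is masked because the others mix. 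The standard way to handle this is: if in \emph{every} subgame some player has a profitable (hence necessarily "masked") deviation, derive a contradiction with the uniform boundedness of the $D_i^\delta$ along an appropriate branch of the tree.

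The key steps, in order, would be: (i) Record the one-shot equilibrium property: at each $h$, for each $i$, $\E[D_i^\delta\mid h,x(h)]\ge\overline v_{O,i}(D_i^\delta,h)\ge D_i^\delta(h)$, and moreover any unilateral change by $i$ does not raise $i$'s one-shot payoff. (ii) Describe the candidate $\ep$-equilibrium in a subgame $\Gamma_h$: follow $\sigma^*$; if a player $i$ deviates to an action outside the support prescribed (detectable because all actions are observed) switch to a punishment profile pushing $i$ to $\overline v_i+\delta$ — exactly as in the proof of the extensive-form correlated equilibrium, but now with \emph{public} monitoring so no mediator is needed. As in that proof, a detectable deviation gains at most $2\delta<\ep$. (iii) The remaining case is an \emph{undetectable} deviation, i.e.\ player $i$ randomizes within the support of $x_i(h)$ at each history but with different probabilities; here one invokes the recent result of Alon, Gunby, He, Shmaya, and Solan (2022) to argue that, up to $\delta$, such a deviation cannot be hidden over time along almost every play — that is, there is a history $h^\star$ reached with positive probability at which any distribution on $\supp(x_i(h^\star))$ that deviates "enough" from $x_i(h^\star)$ is statistically detectable, and once we are at such an $h^\star$ the support-deviation punishment from step (ii) applies and caps the gain. (iv) Conclude that the subgame $\Gamma_{h^\star}$ (for a suitable $h^\star$ obtained by iterating this over all players $i\in I$, exploiting finiteness of $I$) admits an $\ep$-equilibrium: within $\supp(x_i(h))$ a deviation is eventually detected and punished, outside $\supp(x_i(h))$ it is detected immediately, and not deviating yields $\E_{h^\star,\sigma^*}[f_i]\ge\overline v_i(h^\star)-\delta$, which by construction of $h^\star$ dominates any deviation payoff up to $\ep$.

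The main obstacle is step (iii): making precise how the Alon–Gunby–He–Shmaya–Solan (2022) result lets one "detect deviations from non-stationary strategies" and turning that into the existence of a single history $h^\star$ at which \emph{all} profitable deviations become detectable (for every player simultaneously, handled by finiteness of $I$). Concretely, the difficulty is that $\sigma^*$ is highly non-stationary — the mixed actions $x(h)$ vary arbitrarily with $h$ — so one cannot use a simple statistical test on a fixed distribution; one needs the cited result to guarantee that along $\P_{h,\sigma^*}$-almost every play a player who deviates by more than $\delta$ (in a suitable sense, e.g.\ in total variation accumulated over stages) is caught with probability close to $1$, which is precisely what isolates the good history $h^\star$. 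Everything else — the punishment construction, the $2\delta$ bound for detectable deviations, and assembling the final $\ep$-equilibrium — is a routine adaptation of the extensive-form correlated equilibrium proof above, with public monitoring replacing the mediator.
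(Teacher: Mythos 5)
There is a genuine gap, and it lies exactly where you place the ``main obstacle'': step (iii) asks the Alon--Gunby--He--Shmaya--Solan (2022) result to do something it does not do, and the proposal is missing the ingredient that actually makes within-support deviations harmless. A deviation in which player~$i$ re-weights probabilities inside $\supp(x_i(h))$ can never be detected with certainty, and no statistical test over time will catch small or one-shot re-weightings while keeping the false-positive rate low; the cited result is not such a test. It is a \emph{blame function}: given a prescribed profile and a closed set of plays of high probability under it, once the realized play \emph{leaves} that set (an event observed in finite time, by closedness), the function names a player to punish, with only a small probability of blaming an innocent one. The paper uses it only for that purpose. Within-support deviations whose induced play stays inside the good set are never detected at all; they are unprofitable not because of detection but because the subgame $\Gamma_{h^*}$ is chosen so that the payoff is nearly constant there. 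Concretely, the paper picks $h^*$ via martingale convergence of $\E_{\sigma^*}[f\mid\calF^n]$ and of the process $D^\delta_i(r^n)$ together with L\'evy's zero-one law, so that with $\prob_{h^*,\sigma^*}$-probability close to $1$ all players' payoffs and Martin-function values (hence minmax values) are within $\delta$ of constants $c_i$; regularity then gives a compact $K$ of such runs with probability close to $1$. Your proposal has no analogue of this step, and without it the approach fails: at a generic history a player can silently shift weight within supports toward runs where $f_i$ is larger, and no punishment scheme is ever triggered.

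A second missing idea is the danger that a deviator steers the play (again within supports, hence undetectably) toward histories where her \emph{own} minmax value exceeds $c_i$, so that the punishment ``$\overline v_i+\delta$'' you rely on in step (ii) is no deterrent at the moment it is applied. Your $2\delta$ bound for detectable deviations is fine for out-of-support actions at the moment of deviation, but the punishment that matters in the construction is triggered later, when the play exits $K$, and its effectiveness at that point is not automatic. The paper handles this with the quantities $\Lambda_i$ and $\zeta_i$ (a cumulative ``fictitious probability'' of having reached a high-minmax history), the threshold stopping time $\nu_i$, and the four-set case analysis $E_1,\dots,E_4$ in Step~6; a player whose $\zeta_i$ crosses $\sqrt\delta$ is shown to gain at most about $5\delta$ by a chain of inequalities that again uses the near-constancy of the processes $Y^n_i,W^n_i$ on $K$. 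So the correct architecture is: choose $h^*$ for near-constancy of payoffs and minmax values, approximate the good runs by a compact $K$, punish upon exit from $K$ using the blame function, and control steering toward high-minmax histories via $\zeta_i$ --- rather than trying to make all profitable deviations statistically detectable at some history, which is not achievable.
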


Note that the $\ep$-solvable subgame that is guaranteed to exist by Theorem~\ref{theorem:easy} may depend on $\ep$.

Our result connects and gives a very partial answer to the long-standing open problem whether every multiplayer stochastic game 
with finite action sets, finite or countably infinite
state space,  
and bounded and Borel-measurable payoffs admits an $\ep$-equilibrium, for every $\ep>0$. As mentioned in the Introduction, a subgame-perfect $\ep$-equilibrium does not always exist, so there are stochastic games in which there is no single strategy profile that induces an $\ep$-equilibrium in all subgames simultaneously.

When the payoff functions of the players satisfy the condition of \emph{shift-invariance} (also called \emph{prefix-independence}),
we can strengthen Theorem~\ref{theorem:easy}. 
The payoff function $f_i$ for player $i\in I$ is called shift-invariant, if $f_i(s^1,a^1,s^2,a^2,s^3,a^3,\ldots)=f_i(s^2,a^2,s^3,a^3,\ldots)$ for every run $(s^1,a^1,s^2,a^2,s^3,a^3,\ldots)\in\mathscr{R}$. Equivalently, $f_i$ is shift-invariant if whenever two runs have the form $hr$ and $h'r$, i.e., they only differ in the prefixes $h$ and $h'$, then $f_i(hr) = f_i(h'r)$. The set of shift-invariant
functions is not included by, neither does it include, the set of Borel-measurable functions, see Rosenthal (1975) and Blackwell and Diaconis (1996).
Many evaluation functions in the literature of dynamic games are shift-invariant, such
as the long-run average payoff (cf.~Remark~\ref{undisc}), and the limsup of
stage payoffs (e.g., Maitra and Sudderth (1993)). Various classical winning conditions in the
computer science literature, such as the B\"{u}chi, co-B\"{u}chi, parity, Streett, and M\"{u}ller (see, e.g.,
Horn and Gimbert (2008), Chatterjee and Henzinger (2012), or Bruy\'{e}re (2021)) are also shift-invariant.
The discounted payoff (e.g., Shapley (1953)) is not shift-invariant.

When the payoff functions of the players are all shift-invariant, and a subgame at some history $h=(s^1,a^1,s^2,a^2,\ldots,s^n)$ is $\ep$-solvable, then by shift-invariance, there is an $\ep$-equilibrium for the initial state $s^n$. This implies the following corollary of Theorem \ref{theorem:easy}.

\begin{corollary}
\label{corollary:easy}
Suppose that player $i$'s payoff function $f_i$ is 
bounded, Borel-measurable, and
shift-invariant, for each $i \in I$.
Then, 
for every $\ep>0$, there is an initial state $s \in S$ for which an $\ep$-equilibrium exists.
\end{corollary}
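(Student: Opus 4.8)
The plan is to build, for a small $\delta>0$ to be related to $\ep$ at the end, a single history $h^*$ and a strategy profile $\tau$ in the subgame $\Gamma_{h^*}$ that is an $\ep$-equilibrium for the initial state $s_{h^*}$; the subgame $\Gamma_{h^*}$ is then $\ep$-solvable. The ingredients are the minmax $\delta$-acceptable strategy profile $\sigma^*$ furnished by Theorem~\ref{theorem:acceptable} (so $\E_{h,\sigma^*}[f_i]\ge\overline{v}_i(h)-\delta$ for every $h\in H$ and $i\in I$), a statistical device that detects deviations from non-stationary mixed strategies, and punishment of a detected deviator at her minmax level. Throughout, for each $i\in I$, write $w_i(h):=\sup_{\sigma_i\in\Sigma_i}\E_{h,\sigma_i,\sigma^*_{-i}}[f_i]$ for player~$i$'s best-reply value against $\sigma^*_{-i}$ in $\Gamma_h$; note $\overline{v}_i(h)\le w_i(h)$ always, and set $g_i(h):=w_i(h)-\E_{h,\sigma^*}[f_i]\ge 0$.

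The first step is a martingale observation. Since computing a best reply to the fixed profile $\sigma^*_{-i}$ is a one-player dynamic-programming problem, $w_i$ satisfies a Bellman equation over histories, and consequently, for \emph{every} strategy $\sigma_i$ of player~$i$, the process $n\mapsto w_i(r^n)$ is a bounded supermartingale under $\P_{h,\sigma_i,\sigma^*_{-i}}$. Along $\P_{h,\sigma^*}$ its one-stage drop is exactly player~$i$'s one-stage deviation gain at $r^n$ in the auxiliary one-shot game whose payoff is $w_i$ at the next history, so boundedness forces these one-stage gains to be $\P_{s^1,\sigma^*}$-almost surely summable. The gap $g_i(h)$ then splits into a part collecting the future one-stage gains along honest play — arbitrarily small once $h$ is deep enough, by this summability — and a residual $\E_{h,\sigma^*}[W_i^\infty-f_i]$, where $W_i^\infty:=\lim_n w_i(r^n)\ge f_i$; this residual is the value of deviations that are individually invisible stage by stage yet globally worthwhile. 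The task is to reach a history $h^*$ at which $g_i(h^*)\le\delta$ for \emph{all} players $i$ simultaneously, and controlling the residual is the crux: I expect it requires a recursion over subsets of players in the spirit of Vieille (2000c) — one freezes at a best reply those players who retain such a deviation from every subgame, passes to a subgame in which these players are already satisfied, and treats the remaining players inductively, using minmax-acceptability to keep the frozen players individually rational. This is the main obstacle of the proof.

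Granting such a history $h^*$ and the (re-labelled) profile $\sigma^*$ — now minmax-acceptable in $\Gamma_{h^*}$ and with $g_i(h^*)\le\delta$ for all $i$ — I would define $\tau$ on $\Gamma_{h^*}$ as follows. All players play according to $\sigma^*$, and in parallel they jointly run, for each player $j$, the detection test of Alon, Gunby, He, Shmaya, and Solan (2022), comparing player~$j$'s realized actions to her prescribed mixed actions $\sigma^*_j(\cdot)$. This test has the properties that honest play of player~$j$ triggers it with probability at most $\delta$, whereas for \emph{any} strategy of player~$j$, either the test is triggered with probability at least $1-\delta$, or the induced distribution on plays is within $\delta$ in total variation of the honest one. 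The first time some player~$j$'s test is triggered, all players other than $j$ switch to a strategy profile that caps $j$'s continuation payoff at $\overline{v}_j+\delta$ from that stage on (which exists by the definition of the minmax value), ties between simultaneously triggered tests being broken by player index. Since all realized actions are observed by everyone, $\tau$ is a strategy profile in $\Gamma_{h^*}$.

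Finally I would verify that $\tau$ is an $\ep$-equilibrium for $s_{h^*}$. Under $\tau$ all tests together are triggered with probability at most $|I|\delta$, on whose complement the play is distributed as under $\sigma^*$, so $\E_{h^*,\tau}[f_i]\ge\E_{h^*,\sigma^*}[f_i]-|I|\delta\ge\overline{v}_i(h^*)-(|I|+1)\delta$. Now suppose player~$i$ deviates to $\sigma_i$, and let $\theta$ be the stopping time at which $i$'s test is first triggered; all expectations below are under $\P_{h^*,\sigma_i,\tau_{-i}}$. On $\{\theta<\infty\}$ player~$i$ has faced $\sigma^*_{-i}$ up to $\theta$ and is thereafter capped at $\overline{v}_i(r^\theta)+\delta\le w_i(r^\theta)+2\delta$, so her payoff contribution from this event is at most $\E[\mathbf{1}_{\theta<\infty}\,w_i(r^\theta)]+2\delta$; optional stopping for the supermartingale $w_i$ gives $\E[\mathbf{1}_{\theta<\infty}\,w_i(r^\theta)]+\E[\mathbf{1}_{\theta=\infty}\,w_i(r^\infty)]\le w_i(h^*)$, and on $\{\theta=\infty\}$ the detection guarantee (together with $w_i\ge\E_{\cdot,\sigma^*}[f_i]$ and the a.s.\ convergence of $\E_{\cdot,\sigma^*}[f_i\mid\mathcal{F}_n]$ to $f_i$ under $\P_{h^*,\sigma^*}$) yields $\E[\mathbf{1}_{\theta=\infty}\,w_i(r^\infty)]\ge\E[\mathbf{1}_{\theta=\infty}\,f_i]-O(\delta)$. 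Adding the payoff contribution from $\{\theta=\infty\}$ and recombining, the terms $\E[\mathbf{1}_{\theta=\infty}\,f_i]$ cancel and player~$i$'s payoff against $\tau_{-i}$ is at most $w_i(h^*)+O(\delta)\le\E_{h^*,\sigma^*}[f_i]+O(\delta)\le\E_{h^*,\tau}[f_i]+O(\delta)$, using $g_i(h^*)\le\delta$. Taking $\delta:=\ep/C$ for a large enough absolute constant $C$ makes $\tau$ an $\ep$-equilibrium in $\Gamma_{h^*}$, which proves Theorem~\ref{theorem:easy}; Corollary~\ref{corollary:easy} then follows at once from the shift-invariance remark that precedes it. The delicate points are the recursive construction of $h^*$ and the precise form in which the result of Alon, Gunby, He, Shmaya, and Solan (2022) is invoked — it is exactly that result which rules out stagewise-invisible deviations drifting the play to a part of $\mathscr{R}$ singular with respect to honest play, something no per-stage test could prevent.
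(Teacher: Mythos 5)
Your reduction at the very end is the easy part: once Theorem~\ref{theorem:easy} is available, shift-invariance turns an $\ep$-equilibrium of the subgame $\Gamma_{h^*}$ into an $\ep$-equilibrium for the initial state $s_{h^*}$, which is exactly the paper's (two-line) proof of the corollary. So everything rests on your re-derivation of Theorem~\ref{theorem:easy}, and that re-derivation has a genuine gap which you yourself flag: the existence of a history $h^*$ with $g_i(h^*)=w_i(h^*)-\E_{h^*,\sigma^*}[f_i]\leq\delta$ for \emph{all} players simultaneously. This is not a routine step that a Vieille-style recursion can be expected to fill, because for a minmax $\delta$-acceptable profile $\sigma^*$ there may be \emph{no} history at which it is close to a best reply even for a single player. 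For instance, with a single state and long-run average payoffs, let player $i$'s stage payoff be $1$ on $(T,L)$ and $(B,R)$ and $0$ otherwise, let the opponent's payoff be identically $0$, and let $\sigma^*$ prescribe that the opponent always plays $L$ while player $i$ mixes $\tfrac12$--$\tfrac12$: then $\E_{h,\sigma^*}[f_i]=\tfrac12=\overline v_i(h)$ in every subgame, so $\sigma^*$ is minmax $0$-acceptable, yet $w_i(h)=1$ and $g_i(h)=\tfrac12$ in every subgame. Nothing in Theorem~\ref{theorem:martin} or Theorem~\ref{theorem:acceptable} controls $w_i-\E_{\cdot,\sigma^*}[f_i]$ (property (3) gives only lower bounds on payoffs), so demanding $g_i(h^*)\leq\delta$ for all $i$ amounts to demanding that $\sigma^*$ itself be a $\delta$-equilibrium in some subgame --- essentially the statement to be proved. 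The paper's proof deliberately avoids this: it never makes $\sigma^*$ an approximate best reply, but instead locates $h^*$ where, under $\sigma^*$, the payoffs and the Martin values are nearly constant (close to $c$) on a compact set $K$ of probability close to $1$, and obtains the equilibrium by punishing exits from $K$.

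The second gap is the detection guarantee you attribute to Alon, Gunby, He, Shmaya, and Solan (2022). Their result, as used in the paper, supplies a \emph{blame function} for exits from a fixed open set (here the complement of $K$): for every $i$ and every $\sigma_i$, the probability under $(\sigma_i,\sigma^*_{-i})$ that the play leaves $K$ and the blame falls on a player other than $i$ is at most $\eta$. It does not supply, for each player, a test that honest play triggers with probability at most $\delta$ and such that any non-triggering deviation keeps the play distribution within $\delta$ in total variation of the honest one; no such test exists, since a deviator can play a pure action sequence lying inside the non-trigger set, never triggering while inducing a distribution mutually singular with the honest one. Your key estimate on the no-detection event, $\E[\mathbf{1}_{\theta=\infty}w_i(r^\infty)]\geq\E[\mathbf{1}_{\theta=\infty}f_i]-O(\delta)$, rests precisely on that total-variation dichotomy and therefore fails. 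In the paper, the harmlessness of undetected deviations comes instead from the near-constancy of $f_i$ on $K$ (so staying in $K$ yields at most $c_i+\delta$), and the effectiveness of punishment comes from the statistic $\zeta_i$, which bounds the probability of reaching a history where the deviator's minmax value exceeds $c_i+3\delta$. Your optional-stopping bound via the supermartingale $w_i$ is an appealing substitute for the $\zeta_i$-test, but it only pays off if $w_i(h^*)\approx\E_{h^*,\sigma^*}[f_i]$, i.e.\ only if the first, unobtainable step had succeeded; as it stands the argument does not establish Theorem~\ref{theorem:easy}, and hence not the corollary either.
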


Since the long-run average payoff is shift-invariant, Corollary~\ref{corollary:easy} implies the results of Thuijsman and Vrieze (1991) and Vieille (2000c) as mentioned above.\bigskip

\begin{proof}[Proof of Theorem~\ref{theorem:easy}]
Assume w.l.o.g.~that all payoffs are between 0 and 1, that is, $f_i(r)\in[0,1]$ for each player $i\in I$ and each run $r\in\mathscr{R}$. 

Fix $\ep \in (0,1]$
and let $\delta \in (0,1)$ be sufficiently small so that
\begin{equation}\label{choice-delta}
5\delta+4(|I|+1)\delta^{\frac{1}{4}} \,<\, \ep.
\end{equation}
For each player $i \in I$, let $D_i^\delta$ be the function given by Theorem~\ref{theorem:martin} (for the version with the minmax value). Let $D^\delta=(D^\delta_i)_{i\in I}$. 
Let $\sigma^*$ be the minmax $\delta$-acceptable strategy profile
given in Theorem~\ref{theorem:acceptable}:
for each history $h \in H$, the mixed action profile
$\sigma^*(h)$ is an equilibrium in the one-shot game $G_O(D^\delta,h)$.

For each history $h \in H$, denote by $C(h) \subseteq \calR$ the cylinder set defined by $h$ in the Borel sigma-algebra $\calB(\calR)$:
\[ C(h) := \{ r \in \calR \colon h \prec r\}. \]
For each $n \in \dN$, denote by $\calF^n$ 
the sigma-algebra over $\calR$ that is defined by histories in stage $n$;
that is, $\calF^n$ is the minimal sigma-algebra that contains,
for each history $h$ in stage $n$, the set $C(h)$.

Throughout the proof we will define various functions $\varphi$ whose domain is $\calR$
and that satisfy the following condition:
there is a nonempty set $Z \subseteq H$ of histories such that for every $h \in Z$, the function $\varphi$ is constant over $C(h)$.
In such a case, we will denote by $\varphi(h)$ the value of $\varphi$ on $C(h)$.
Two cases that satisfy this condition are:
\begin{enumerate}
    \item When $(Y^n)_{n \in \dN}$ is a stochastic process defined on $\calR$ and adapted to the filtration $(\calF^n)_{n \in \dN}$, for each $n,k\in\dN$ with $k\geq n$, the function $Y^n$ satisfies this condition with respect to the set $Z$ that consists of all histories in stage $k$.
    \item When $\theta : \calR \to \dN \cup \{\infty\}$ is a stopping time adapted to the filtration $(\calF^n)_{n \in \dN}$, the function $\theta$ satisfies this condition with respect to the set $Z$ that consists of all histories $r^{\theta(r)}$, i.e., the prefix of $r$ up to stage $\theta(r)$, for all $r\in\mathscr{R}$ satisfying $\theta(r) < \infty$.
\end{enumerate}

\bigskip
\noindent\textbf{General idea:}
We will use the strategy profile $\sigma^*$ to 
identify an $\ep$-solvable subgame and 
derive an $\ep$-equilibrium
in this subgame.
The strategy profile $\sigma^*$ itself is not an $\ep$-equilibrium,
because
there are some ways in which a player, say player~$i$, may be able to profit by deviating from $\sigma^*$:
\begin{itemize}
\item 
 Indeed, player~$i$'s payoff may not be constant on the support of $\sigma^*$, and 
hence, among the actions that receive a positive probability under $\sigma_i^*$, player~$i$ may prefer some actions to others. We will deal with this problem as follows. 
The boundedness and Borel-measurability of $(f_i)_{i \in I}$ imply 
that there is a history $h \in H$ such that
on $C(h)$, with $\prob_{h,\sigma^*}$-probability close to 1,
the payoffs of all players are almost constant.
Since all probability measures over $\calR$ are regular,
there is a compact set $K\subseteq C(h)$ such that $K$ has $\prob_{h,\sigma^*}$-probability close to 1,
and thus on $K$ the payoff functions of all players are almost constant.
We ensure that no player deviates to a play outside $K$,
by instructing the players to punish a deviator if a play outside $K$ is reached.
Since $K$ is closed, the fact that the realized play is outside $K$ is known in finite time.
To identify the deviator, we will use a recent result by Alon, Gunby, He, Shmaya, and Solan (2022).

\item
 Since punishment strategies are used against deviations, it is essential that the punishment is effective. This is only the case if player~$i$'s minmax value at the history where the punishment starts is not much higher than her expected payoff upon following $\sigma^*$.
To profit, player~$i$ may use this observation and deviate in such a way that leads to a history where 
her payoff is high 
and her minmax value is high as well,
so that player $i$ improves her payoff even when punished.
Theorem~\ref{theorem:martin} implies that the minmax value is almost a submartingale under $\sigma^*$,
hence there is a history $h\in H$ such that in the subgame $\Gamma_h$, with $\prob_{h,\sigma^*}$-probability close to 1 the minmax value of all players is almost constant.
We will add a test that verifies that no player~$i$ plays in a way that increases the probability to reach a history where her minmax value is high.
A player who fails this test will be punished in an effective way.
\end{itemize}

We now turn these ideas into a formal proof.

\noindent\textbf{Step 1:} Representing the probability to reach a given set of histories.

In this step, we prove a certain formula for the probability to reach a given set of histories.
Let $Q \subseteq H$ be a set of histories such that no history in $Q$ is an extension of another history in $Q$: there are no $h,h'\in Q$ with $h\prec h'$.
Let $\sigma$ be a strategy profile,
let $n\in\dN$, and let $\theta : \calR \to \dN \cup \{\infty\}$ be a stopping time. 
Recall that the prefix of a run $r\in\mathscr{R}$ in stage $k$ is denoted by $r^k$. 
We will provide a formula for 
$\prob_\sigma(r^k \in Q \hbox{ for some } k \in\{n,n+1,\ldots,\theta(r)\})$,
which is the probability under $\sigma$ that the run reaches a history in $Q$ in one of the stages between $n$ and the stopping time.

Fix a player $i \in I$. For every history $h\in H$ and every action $a_i \in A_i$, define
\begin{equation}
\label{equ:lambda}
\Lambda_i(h,a_i) \ := \ 
\sum_{\{a_{-i} \in A_{-i},\, s \in S \colon (h,(a_i,a_{-i}),s) \in Q\}}\ \sigma_{-i}(a_{-i}\mid h) \cdot p(s \mid s_h,a_i,a_{-i}),
\end{equation} 
which is the probability that the history in the next stage is in $Q$ (and then this is the first stage when the history is in $Q$), when player~$i$ selects the action $a_i$ and the other players follow $\sigma_{-i}$. Recall that $\sigma_{-i}(a_{-i}\mid h)$ denotes the probability assigned to the action profile $a_{-i}$ under the strategy profile $\sigma_{-i}$ at the history $h$, and $p(s \mid s_h,a_i,a_{-i})$ is the probability that the next state is $s$ when, in the current state $s_h$ at the history $h$, the action profile is $(a_i,a_{-i})$.
Note that $\Lambda_i(h,a_i) = 0$ whenever a prefix of $h$ lies in $Q$.
To save cumbersome notations, we do not specifically mentioned the dependency of $\Lambda_i(h,a_i)$ on $Q$ and $\sigma_{-i}$.

For every run $r = (a^1,a^2,\ldots) \in \calR$, every $n\in\dN$ with $n\geq 2$, and every $\ell \in \dN \cup \{\infty\}$, define
\begin{equation}
\label{equ:easy:42}
\zeta_i(r,n,\ell) \,:=\, \sum_{k = n}^{\ell}\Lambda_i(r^{k-1},a^k_i) 
\in \dR_+
\end{equation}
if $n\leq \ell$, and define $\zeta_i(r,n,\ell)=0$ if $n>\ell$.
Though the quantity $\zeta_i(r,n,\ell)$ 
may be larger than 1,
it can be thought of as
a fictitious probability that the run could have reached a history in $Q$ at any of the stages $n,\ldots,\ell$,
given the actual run $r$ and assuming that player $i$'s opponents follow $\sigma_{-i}$. 
A quantity similar to $\zeta_i$ was defined in Flesch and Solan (2022a)
in their study of two-player stochastic games.

As we now show,
the \emph{expectation} of $\zeta_i$ is indeed the probability to reach $Q$.
Specifically, we argue that for every $n\in\dN$ and every stopping time $\theta:\mathscr{R}\to\dN\cup\{\infty\}$,
\begin{equation}
\label{equ:easy:1}
\E_{\sigma}\bigl[\zeta_i(r,n,\theta(r)) \bigr] \,=\, \prob_{\sigma}\bigl(r^k \in Q \hbox{ for some }k\in\{n,n+1,\ldots,\theta(r)\} \bigr). 
\end{equation}
Indeed, 
for every history $h$, let
$\theta(\neg h)$ be a Boolean variable that is true if and only if
$\theta$ does not stop along the history $h$ (that is, $\theta(r)>\len(h)$ for each $r\succ h$). 
Then,
\begin{align}
\label{equ:easy:71}
\E_{\sigma}&\bigl[\zeta_i(r,n,\theta(r)) \bigr] 
\,=\,
\E_{\sigma}\left[ \sum_{k=n}^{\theta(r)} \Lambda_i(r^{k-1},a^k_i) \right]\\[0.1cm]
\label{equ:easy:72}
&=\,
\sum_{k=n}^\infty \ 
\sum_{\{h \in H\colon \len(h) = k-1,\ \theta(\neg h)\}}\ \prob_{\sigma}\bigl(h\bigr) \cdot\left[
\sum_{a_i \in A_i} \sigma_i(a_i\mid h) \cdot \Lambda_i(h,a_i)\right]\\[0.1cm]
\label{equ:easy:73}
&=\,
\prob_{\sigma}\bigl(r^k \in Q \hbox{ for some }k\in\{n,n+1,\ldots,\theta(r)\} \bigr),
\end{align}
where \Eqref{equ:easy:71} follows from the definition of $\zeta_i$,
\Eqref{equ:easy:72} follows from changing the order of summation,
and \Eqref{equ:easy:73} holds by the definition of $\Lambda_i$.

\bigskip

\noindent\textbf{Step 2:} Identifying a history $h^*$, or equivalently, a subgame $\Gamma_{h^*}$, and identifying a target equilibrium payoff $c$.

The process $(\E_{\sigma^*}\bigl[f_i \mid \calF^n])_{n \in \dN}$ is a martingale that converges $\prob_{\sigma^*}$-a.s.~to $f_i$,
for each player $i\in I$, and hence denoting $f=(f_i)_{i\in I}$,
there is $n_1 \in \dN$ such that%
\footnote{The constant $\frac{2}{3}$ in \Eqref{equ:easy:16} 
and Eqs.~\eqref{equ:easy:17} and~\eqref{equ:easy:18} can be replaced by any three constants whose sum is larger than 2.}
\begin{equation}
\label{equ:easy:16}
\prob_{\sigma^*}\Bigl( \bigl\|\,f(r) - \E_{r^n,\sigma^*}[f ]\,\bigr\|_\infty \leq \delta, \ \ \ \forall n \geq n_1 \Bigr) \,>\, \frac{2}{3}; 
\end{equation} 
thus, under the strategy profile $\sigma^*$, with probability more than $\frac{2}{3}$, it holds that in all stages $n\geq n_1$, the expected payoff in the subgame at the history in stage $n$ is close to the realized payoff.

For each player $i \in I$ and stage $n \in \dN$, define a random variable $Y^n_i : \calR \to \mathbb{R}$ by
\[ Y^n_i(r)\, :=\, D^\delta_i(r^n),\qquad \forall r\in\mathscr{R}. \]
Because $\sigma^*(h)$ is an equilibrium in the one-shot game $G_O(D^\delta,h)$ for each history $h\in H$, property~(2) of Theorem~\ref{theorem:martin} implies that 
the process $(Y^n_i)_{n \in \dN}$ is a submartingale under $\prob_{\sigma^*}$: 
\begin{equation}\label{Ysubm}
\E_{\sigma^*}[Y^{n+1}_i\mid{\cal{F}}^n]\,\geq\,Y^n_i,\qquad \forall n\in\dN.
\end{equation}
Hence, $(Y^n_i)_{n \in \dN}$ converges $\prob_{\sigma^*}$-a.s.~to a limit $Y_i^\infty$.
Denote
\[ Y^n(r) \,:=\, (Y^n_i(r))_{i \in I},\qquad \forall r\in\mathscr{R},\, \forall n \in \dN. \]
Since the sequence $(Y^n(r))_{n \in \dN}$ converges $\prob_{\sigma^*}$-a.s.,
there is $n_2 \in \dN$ such that 
\begin{equation}
\label{equ:easy:17}
\prob_{\sigma^*}\bigl( \|Y^k(r) - Y^{n}(r)\|_\infty \leq \delta, \ \ \forall k \geq n \geq n_2 \bigr) \,>\, \frac{2}{3}. 
\end{equation} 

For each player $i \in I$ and stage $n \in \dN$, denote 
\[ W^n_i \,:=\, \E_{\sigma^*}[Y^{n+1}_i \mid \calF^n], \qquad \forall n \in \dN. \]
Using \Eqref{Ysubm}, we have $W^n_i\geq Y^n_i$ for every $n \in \dN$.
Hence, the sequence $(W^n_i)_{n \in \dN}$ is a submartingale under $\prob_{\sigma^*}$. Indeed,
\[\E_{\sigma^*}[W^{n+1}_i \mid \calF^n]
\ \geq\  \E_{\sigma^*}[Y_i^{n+1} \mid \calF^n] \ =\ W_i^n, \qquad\forall n\in\mathbb{N}.
\]
Thus, the sequence $(W^n_i)_{n \in \dN}$ converges $\prob_{\sigma^*}$-a.s.~to $Y_i^\infty$.
Denote
\[ W^n(r) \,:=\, (W^n_i(r))_{i \in I},\qquad \forall r\in\mathscr{R},\, \forall n \in \dN. \]
Since the sequence $(W^n(r))_{n \in \dN}$ converges $\prob_{\sigma^*}$-a.s.,
there is $n_3 \in \dN$ such that 
\begin{equation}
\label{equ:easy:18}
\prob_{\sigma^*}\bigl( \|W^k(r) - W^{n}(r)\|_\infty \leq \delta, \ \ \forall k \geq n \geq n_3 \bigr) \,>\, \frac{2}{3}. 
\end{equation} 

Set $n_0 := \max\{n_1,n_2,n_3\}$, and define
\begin{equation} 
\label{equ:easy:41}
\widehat{\mathscr{R}}_{n_0} := 
\left\{ r \in \calR \colon 
\begin{array}{l}
\|f(r) - \E_{r^n,\sigma^*}[f ]\|_\infty \leq \delta, \ \ \ \forall n \geq n_0\\
\|Y^k(r) - Y^n(r)\|_\infty \leq \delta, \ \ \ \forall k \geq n\geq n_0\\
\|W^k(r) - W^n(r)\|_\infty \leq \delta, \ \ \ \forall k \geq n\geq n_0
\end{array} \right\}.
\end{equation}
By Eqs.~\eqref{equ:easy:16}, \eqref{equ:easy:17},
and~\eqref{equ:easy:18},
\[ \prob_{\sigma^*}\bigl(\widehat{\mathscr{R}}_{n_0}\bigr)\, >\, 0. \]
As a consequence of L\'{e}vy's zero-one law,
there is $n^* \geq n_0$ and a history $h^*\in H$ in stage $n^*$ such that 
\begin{equation}
\label{equ:easy:19}
\prob_{h^*,\sigma^*}\bigl(\widehat{\mathscr{R}}_{n_0}
\bigr) \,>\, 1-\delta.
\end{equation}
Denote
\begin{equation}
\label{equ:easy:c}
c \,:=\, \E_{h^*,\sigma^*}[f] \in \dR^{|I|}. 
\end{equation}
By the definitions of $\widehat{\mathscr{R}}_{n_0}$ and $c$, we have
\begin{equation}
\label{onRn0}
c_i-\delta\,\leq\,f_i(r)\,\leq\,c_i+\delta,\qquad \forall i\in I,\,\forall r\in\widehat{\mathscr{R}}_{n_0}\cap C(h^*),
\end{equation}
and by \Eqref{lowerpayoff}, we have
\begin{equation}
\label{equ:easy:20}
c_i \,=\, 
\E_{h^*,\sigma^*}[ f_i]
\,\geq\,\overline v_{O,i}(D^\delta_i,h^*)\,\geq\,
D^\delta_i(h^*)\,\geq\,\overline{v}_i(h^*)-\delta,\qquad\forall i \in I.
\end{equation}

We will construct an $\ep$-equilibrium in the subgame $\Gamma_{h^*}$ with payoff close to $c$.

\bigskip
\noindent
\textbf{Step 3:} Histories with high minmax value.

For each player $i \in I$ denote by $Q_i$ the set of histories $h\in H$ such that (i) $h\succeq h^*$, (ii) $\overline v_i(h) > c_i + 3\delta$, and (iii) $\overline{v}_i(h')\leq c_i+3\delta$ for each $h'\in H$ with $h^*\preceq h'\prec h$.
We interpret $Q_i$ as the set of histories in the subgame $\Gamma_{h^*}$ where player $i$'s minmax value is high for the first time.

For each player $i\in I$ and run $r \in \calR$,
let $m_i(r)$ be the entry time to $Q_i$: If $r$ has a prefix that belongs to $Q_i$, then this prefix is unique, and $m_i(r)$ is thus the unique stage such that $r^{m_i(r)}\in Q_i$. If $r$ has no prefix in $Q_i$ then $m_i(r)=\infty$. 

For each $i \in I$,
define the quantities $\Lambda_i(h,a_i)$ and $\zeta_i(r,n,\ell)$
as in Eqs.~\eqref{equ:lambda} and~\eqref{equ:easy:42} with respect to the set $Q_i$. We will be interested in the case when $n=n^*$, i.e., the stage of history $h^*$.

By taking $\theta = \infty$ in \Eqref{equ:easy:1},
we obtain for each player $i \in I$ and each strategy $\sigma_i$,
\begin{eqnarray}
\nonumber
\E_{h^*,\sigma_i,\sigma^*_{-i}}\bigl[\zeta_i(r,n^*,\infty)\bigr] &\,=\,& \prob_{h^*,\sigma_i,\sigma^*_{-i}}\bigl(r^k \in Q_i \hbox{ for some } k\in\{n^*,n^*+1,\ldots\}\bigr)\\[0.1cm]
&=& \prob_{h^*,\sigma_i,\sigma^*_{-i}}\bigl( m_i(r) < \infty \bigr). 
\label{equ:easy:4}
\end{eqnarray}

For each player $i \in I$, let $\nu_i : \calR \to \dN \cup \{\infty\}$ be the stopping time that indicates the first stage after stage $n^*$ at which $\zeta_i$ exceeds the threshold $\sqrt\delta$:
\[ \nu_{i}(r) \,:=\, \min\left\{ k \geq n^* \colon \zeta_i(r,n^*,k) \geq \sqrt\delta \right\}, \]
where the minimum of the empty set is $\infty$.
The intuition is that, when $\nu_i(r)<\infty$, the probability at stage $\nu_i(r)$ that the play could have reached a history with a high minmax value for player~$i$ is higher than $\sqrt\delta$,
and so the other players may suspect that player~$i$ 
is deviating and trying to reach a state where she cannot be punished effectively.

\bigskip
\noindent
\textbf{Step 4:} Identifying a good set of plays $\widehat{\mathscr{R}}\subseteq\mathscr{R}$.

Fix for a moment a player $i \in I$.
A useful property of the set $\widehat{\mathscr{R}}_{n_0}$ is the following:
\begin{equation}
\label{equ:easy:25}
\overline v_i(r^k) \,\leq\, Y_i^{n^*}(r) + 2\delta \,\leq\, c_i + 2\delta, 
\qquad\forall r \in \widehat{\mathscr{R}}_{n_0}\cap C(h^*), \forall k \geq n^*,
\end{equation}
which means that the minmax value of player $i$ in the subgame $\Gamma_{r^k}$ is at most $c_i+2\delta$. 
To see that \Eqref{equ:easy:25} holds, note that for every $r \in \widehat{\mathscr{R}}_{n_0}\cap C(h^*)$ and every $k \geq n^*$,
\begin{eqnarray}
\label{equ:easy:26}
\overline v_i(r^k)
&\leq& D^\delta_i(r^k) + \delta\\ 
\label{equ:easy:26a}
&=&
Y^k_i(r) + \delta\\
\label{equ:easy:27}
&\leq& Y_i^{n^*}(r) + 2\delta\\
\label{equ:easy:28}
&=& D^\delta_i(h^*) + 2\delta\\
\label{equ:easy:29}
&\leq& c_i + 2\delta,
\end{eqnarray}
where \Eqref{equ:easy:26} holds by property~(1) of Theorem~\ref{theorem:martin},
\Eqref{equ:easy:26a} holds by the definition of $Y_i^k$,
\Eqref{equ:easy:27} holds because $r \in \widehat{\mathscr{R}}_{n_0}$ and $k\geq n^*\geq n_0$,
\Eqref{equ:easy:28} holds since $r\in C(h^*)$ and by the definition of $Y_i^{n^*}$,
and \Eqref{equ:easy:29} holds by \Eqref{equ:easy:20}.

We also have 
\begin{equation} 
\label{equ:77}
\prob_{h^*,\sigma^*}\bigl(m_i(r) \,<\, \infty \bigr) 
\,\leq\, \prob_{h^*,\sigma^*}\bigl(\mathscr{R}\setminus \widehat{\mathscr{R}}_{n_0} \bigr) 
\,<\, \delta. 
\end{equation}
Indeed, the second inequality follows from \Eqref{equ:easy:19}. 
To see that the first inequality holds, we show that if $m_i(r)<\infty$ for some run $r\in C(h^*)$, then 
necessarily
$r\in \mathscr{R}\setminus \widehat{\mathscr{R}}_{n_0}$. 
Assume then that $m_i(r)<\infty$ and let $k=m_i(r)$. 
It follows that $r^k\in Q_i$, and hence $\overline{v}_i(r^k)>c_i+3\delta$. Thus, by  
\Eqref{equ:easy:25}, we obtain $r\in \mathscr{R}\setminus \widehat{\mathscr{R}}_{n_0}$ as desired.

Substituting $\sigma_i = \sigma^*_i$ in \Eqref{equ:easy:4} and
using \Eqref{equ:77}, we obtain
\begin{equation}
\label{equ:easy:5}
\E_{h^*,\sigma^*}\bigl[\zeta_i(r,n^*,\infty) \bigr]\,<\, \delta. 
\end{equation}
The random variable $\zeta_i(\cdot,n^*,\infty)$ is non-negative, 
and hence, by Markov's inequality and \Eqref{equ:easy:5},
\begin{equation}
\label{equ:easy:51}
\prob_{h^*,\sigma^*}\left( \zeta_i(r,n^*,\infty) \geq \sqrt\delta \right) \,
\leq\, \frac{\E_{h^*,\sigma^*}\bigl[\zeta_i(r,n^*,\infty)\bigr]}{\sqrt\delta} \,<\, \sqrt\delta.
\end{equation}

Define
\begin{equation}
\label{defwidehatR}
\widehat{\mathscr{R}}\, :=\, \widehat{\mathscr{R}}_{n_0} \cap \left\{ r \in C(h^*) \colon \zeta_i(r,n^*,\infty) < \sqrt\delta, \ \ \forall i \in I\right\}.
\end{equation} 
By Eqs.~\eqref{equ:easy:19} and~\eqref{equ:easy:51}, and since $\delta<1$, 
\[ \prob_{h^*,\sigma^*}\bigl(\widehat{\mathscr{R}} \bigr) \,>\, 1-\delta-|I|\sqrt\delta\,>\, 1-(|I|+1)\sqrt\delta\,>\,0. \]

Since the probability measure $\prob_{\sigma^*}$ is regular,
there is a compact set $K \subseteq \widehat{\mathscr{R}}$ such that 
\begin{equation}
\label{equ:easy:7}
\prob_{h^*,\sigma^*}\big(K\bigr) \,>\, 1-(|I|+1)\sqrt\delta \,>\,0.
\end{equation} 
Note that, as $K\subseteq \widehat{\mathscr{R}}_{n_0}\cap C(h^*)$, \Eqref{onRn0} implies
\begin{equation}
\label{1stpropci}
c_i-\delta\,\leq\,f_i(r)\,\leq\,c_i+
\delta\qquad \forall i\in I,\,\forall r\in K.
\end{equation}

The limits $\lim_{n \to \infty} Y^n$ and 
$\lim_{n \to \infty} W^n$ exist and coincide $\prob_{h^*,\sigma^*}$-a.s. By \Eqref{equ:easy:19} we have $\prob_{h^*,\sigma^*}(\widehat{\mathscr{R}}_{n_0})>0$, and hence there is a run $r\in \widehat{\mathscr{R}}_{n_0}\cap C(h^*)$ for which $\lim_{n \to \infty} Y^n(r)$ and 
$\lim_{n \to \infty} W^n(r)$ exist and coincide. 
Therefore, by the definition of $\widehat{\mathscr{R}}_{n_0}$, we have 
$|Y^{n^*}(r) - Y^\infty(r)| \leq \delta$
and
$|W^{n^*}(r) - Y^\infty(r)| \leq \delta$.
It follows that
\begin{equation}
\label{equ:easy:52}
\|Y^{n^*}(h^*) - W^{n^*}(h^*)\|_\infty \,\leq\,
2\delta.
\end{equation}

Since the set $K$ is closed, 
its complement in the subgame starting at the history $h^*$ $K^c:=C(h^*)\setminus K$ is open.
Hence, $K^c = \bigcup_{h \in Z} C(h)$
for some set $Z \subseteq H$ such that each history in $Z$ extends $h^*$.
We assume w.l.o.g.~that $Z$ is minimal in the following sense: (i) there are no histories $h,h'\in Z$ such that $h\prec h'$ (in this case, we can drop $h'$ from $Z$), and (ii) there is no history $h\in H$ such that $(h,a)\in Z$ for each action profile $a\in A$ (in this case, in $Z$ we can replace all $(h,a)$, for $a\in A$, by the single history $h$).

By Alon, Gunby, He, Shmaya, and Solan (2022),
there is a function $g : K^c \to I$,
where $g(r)$ depends only on the prefix of $r$ that lies in $Z$,
such that
\begin{equation}
\label{equ:71}
\prob_{h^*,\sigma_i,\sigma^*_{-i}}\bigl( r\in K^c \hbox{ and } g(r) \neq i \bigr) \,\leq\, 2|I|\delta^{1/4}\,=:\, \eta,
\qquad\forall i \in I,\, \forall \sigma_i \in \Sigma_i. 
\end{equation} 
The function $g$ is called a \emph{blame function}, and its interpretation is as follows: For every run $r\in K^c$, the function $g$ selects a player, who is blamed for the fact the the run $r$ left the set $K$. In view of \Eqref{equ:71}, the probability that $g$ blames an innocent player, i.e., a player who truthfully followed her strategy in $\sigma^*$, is at most $\eta$. The existence of such a blame function is a key-step in our proof, as it tells us which player to punish when the run leaves $K$.

Define the stopping time $\theta^K : \calR \to \dN \cup \{\infty\}$ as the stage in which the play leaves $K$:
\[ \theta^K(r) \,:=\, \min\bigl\{ k \in \dN \colon r^k \in Z \bigr\}, \]
where the minimum of the empty set is $\infty$. Note that \[K^c\,=\,\{ r \in C(h^*) \colon \theta^K(r) < \infty\}.\]
For a history $h=(s^1,a^1,\ldots,s^{n-1},a^{n-1},s^n)$ extending $h^*$, we similarly define
\[ \theta^K(h) \,:=\, \min\bigl\{ k \in \{1,\ldots,n\} \colon (s^1,a^1,\ldots,s^{k-1},a^{k-1},s^k) \in Z \bigr\}. \]

\bigskip
\noindent\textbf{Step 5:} Definition of a strategy profile $\widehat \sigma$.

We are now ready to define a strategy profile $\widehat \sigma$ in the subgame $\Gamma_{h^*}$.
We will then prove that this strategy profile is an $\ep$-equilibrium in this subgame.

The strategy profile $\widehat\sigma$ coincides with $\sigma^*$, with one modification that ensures that no player can profit too much by deviating. Suppose that the current history is $h\succeq h^*$.
\begin{itemize}
\item
If no prefix of $h$, including $h$, belongs to $Z$, or equivalently, if $\theta^K(h)=\infty$, then $\widehat\sigma_i(h)=\sigma^*_i(h)$ for each player $i\in I$.
\item
If the history $h$ reaches $Z$, or equivalently, if $\theta^K(h)=\len(h)$, then by using the blame function $g$, 
declare player~$g(h)$ the deviator.
From this stage and on,
the opponents of player $g(h)$ punish player~$g(h)$ at her minmax level $\overline v_{g(h)}(h)$ plus $\delta$,
that is,
the opponents switch to a strategy profile $\sigma'_{-g(h)}$
that satisfies
\[ \E_{h,\sigma_{g(h)},\sigma'_{-{g(h)}}}\bigl[ f_{g(h)} \bigr] \,\leq\, \overline v_{g(h)}(h) + \delta, \qquad\forall \sigma_{g(h)}\in\Sigma_{g(h)}. \]
\end{itemize}

\bigskip
\noindent\textbf{Step 6:} $\widehat \sigma$ is an $\ep$-equilibrium in $\Gamma_{h^*}$.

We start by calculating the expected payoff under $\widehat\sigma$.
By Eqs.~\eqref{1stpropci} and \eqref{equ:easy:7} and since all payoffs are between 0 and 1,
\begin{eqnarray}
\nonumber
\E_{h^*,\widehat\sigma}\bigl[f_i \bigr] 
&\geq& \prob_{h^*,\sigma^*}(K)\cdot \E_{h^*,\sigma^*}\bigl[f_i \mid K\bigr]+(1-\prob_{h^*,\sigma^*}(K))\cdot 0\\ 
\nonumber &\geq& (1-(|I|+1)\sqrt\delta) \cdot (c_i-\delta) \\
&\geq& c_i - (|I|+1)\sqrt\delta-\delta\\
&\geq& c_i - 2(|I|+1)\sqrt\delta.\label{equ:easy:39}
\end{eqnarray}

We next show that no player can profit more than $\ep$ by deviating.
Fix then a player $i \in I$ and a strategy $\sigma_i$.
To calculate $\E_{h^*,\sigma_i,\widehat\sigma_{-i}}\bigl[ f_i \bigr]$,
we will divide the set $C(h^*)$ into four subsets,
and bound player~$i$'s payoff from above on each of these sets.

\begin{itemize}
\item \textbf{First subset:} The set $E_1=K$.

On the set $E_1$, no player is punished. By \Eqref{1stpropci}, 
\begin{equation}
\label{equ:easy:10}
\E_{h^*,\sigma_i,\widehat\sigma_{-i}}\bigl[ f_i\cdot \chi_{E_1}\bigr] \,\leq\, \prob_{h^*,\sigma_i,\widehat\sigma_{-i}}\bigl( E_1\bigr)\cdot (c_i + \delta),
\end{equation}
where $\chi_{W}$ denotes the characteristic function of the set $W\subseteq\mathscr{R}$.
\end{itemize}

The following subsets will deal with the set $K^c=C(h^*)\setminus K$, where the player given by the blame function $g$ is punished at her minmax level. 
\begin{itemize}
\item \textbf{Second subset:} The set $E_2 := K^c \cap \{g(r) \neq i\}$.

On $E_2$, a player who is not player~$i$ is punished,
and the only upper bound we have on player~$i$'s payoff is the general upper bound, which is 1.
Fortunately, the event $E_2$ occurs with small probability.
Indeed, by \Eqref{equ:71},
\begin{equation}
\label{equ:easy:11}
\E_{h^*,\sigma_i,\widehat\sigma_{-i}}\bigl[ f_i\cdot \chi_{E_2}\bigr]\,\leq\,\prob_{h^*,\sigma_i,\widehat\sigma_{-i}}\bigl( E_2 \bigr)\,=\,\prob_{h^*,\sigma_i,\sigma^*_{-i}}\bigl( E_2 \bigr)\, \leq\, \eta. 
\end{equation}

\item
\textbf{Third subset:} The set 
$E_3 := K^c \cap \{g(r) = i\} \cap \{\nu_i(r) = \theta^K(r)\}$.

On the set $E_3$, player~$i$ is punished as 
the quantity $\zeta_i$ becomes large: $\zeta_i(r,n^*,\theta^K(r))$ is at least $\sqrt\delta$. This means intuitively that the actions of player $i$ have made it likely that the run leaves the set $K$. Note that we do not rule out the case that $\nu_j(r) = \theta^K(r)$ for some additional player $j \neq i$.

Let $h \succeq h^*$ be a history in some stage $k\geq n^*$ such that none of its prefixes (including $h$ itself) is in $Z$; that is, $\theta^K(h)=\infty$.
Suppose that player $i$ plays action $a_i$ in stage $k$, and from stage $k+1$ on the players $I \setminus \{i\}$ punish player~$i$ at her minmax level plus $\delta$.
Then, player~$i$'s payoff is at most
\begin{align}
\sum_{a_{-i}  \in A_{-i}} 
\sigma^*_{-i}(a_{-i}\mid h)& \cdot \left( \overline v_i(h,a_{i},a_{-i}) + \delta \right)\nonumber\\
&\leq\,
\sum_{a_{-i}  \in A_{-i}} 
\sigma^*_{-i}(a_{-i}\mid h)\cdot\left( D^\delta_i(h,a_{i},a_{-i}) + 2\delta \right)\label{equ:easy:32}\\
&\leq\,
\sum_{a  \in A}
\sigma^*(a\mid h)\cdot\left( D^\delta_i(h,a) + 2\delta \right)\label{equ:easy:32-2}\\
\label{equ:easy:33}
&=\, W^k_i(h) + 2\delta\\
\label{equ:easy:37}
&\leq\, W^{n^*}_i(h^*) + 3\delta\\
\label{equ:easy:36}
&\leq\, Y^{n^*}_i(h^*) + 5\delta\\
\label{equ:easy:34}
&\leq\, c_i + 5\delta,
\end{align}
where \Eqref{equ:easy:32} holds by property~(1) in Theorem~\ref{theorem:martin},
\Eqref{equ:easy:32-2} holds since $\sigma^*(h)$ is an equilibrium in the one-shot one-shot game $G_O(D^\delta,h)$, \Eqref{equ:easy:33} holds by the definition of $W^k_i$,
\Eqref{equ:easy:37} holds since on $K \subseteq \widehat{\mathscr{R}} \subseteq \widehat{\mathscr{R}}_{n_0}$ we have 
$|W^k_i(r) - W^{n^*}_i(r)| \leq \delta$,
\Eqref{equ:easy:36} holds by \Eqref{equ:easy:52},
and \Eqref{equ:easy:34} holds by \Eqref{equ:easy:25}.

Let now $r \in E_3$ and set $k := \theta^K(r)$.
Denote by $h := r^k$ the prefix of $r$ at which the run leaves $K$,
and write $h = (h^*, a^{n^*},s^{n^*+1},a^{n^*+1},\ldots, s^k)$.
Since $k=\theta^K(r)=\nu_i(r)$, we have $\zeta_i(r,n^*,k) \geq \sqrt\delta$.

Denote by $h' = (h^*, a^{n^*},s^{n^*+1},a^{n^*+1},\ldots, s^{k-1})$
the history that precedes $h$.
Since $k-1<\theta^K(r)$, at $h'$ the run $r$ does not leave $K$.
In particular,
$\zeta_i(h',n^*,k-1) < \sqrt\delta$.
The definition of $\zeta_i$ implies that $\zeta_i(h,n^*,k)$ depends neither on the actions selected by player $i$'s opponents at stage $k-1$ nor on $s_k$, see \Eqref{equ:easy:42}.
That is, for each action profile $a_{-i} \in A_{-i}$ and each state $s \in S$ we have
\[ \zeta_i\bigl( (h',a^k_i,a_{-i},s), n^*,k\bigr)
\,=\, \zeta_i( h, n^*,k) \,\geq\, \sqrt\delta. \]
Therefore, by \Eqref{defwidehatR} and since $K\subseteq \widehat{\mathscr{R}}$, we have $(h',a^k_i,a_{-i},s) \not\in \widehat{\mathscr{R}}$ for each $a_{-i} \in A_{-i}$ and each $s \in S$.
In particular, if at the history $h'$ player~$i$ selects the move $a^k_i$,
then whichever actions the other players select at stage $k-1$ and whichever state the run will reach at stage $k$,
the run will leave $K$ at stage $k$,
and some player (not necessarily player~$i$)
will be punished.
Yet, 
by \Eqref{equ:71},
the probability that a player in $I \setminus \{i\}$ will be punished is at most $\eta$.
This implies, by using \Eqref{equ:easy:34} and by the fact that the payoff is at most 1, that 
\begin{equation}
\label{equ:easy:35}
\E_{h^*,\sigma_i,\widehat\sigma_{-i}}\bigl[ f_i \cdot \chi_{E_3} \bigr] \,\leq\, \prob_{h^*,\sigma_i,\widehat\sigma_{-i}}\bigl(E_3 \bigr)\cdot (c_i + 5\delta) + \eta\cdot 1.
\end{equation}

\item
\textbf{Fourth subset:} The set 
$E_4 := K^c \cap \{g(r) = i\} \cap \{\nu_i(r) > \theta^K(r)\}$.

Roughly, on $E_4$, a deviation is announced upon leaving the set $\widehat\calR_{n_0}$ and player $i$ gets punished. We will show that, with high probability, the punishment is effective.

Note that $E_4$ is the disjoint union of the sets $C(h)$, where $h\in Z$, the blame function $g$ is declaring player $i$ the deviator at the history $h$ (it does not depend on the continuation after $h$), and $\zeta_i$ at $h$ is still below $\sqrt\delta$. Let $Z_4\subseteq Z$ denote the set of these histories. At a history $h\in Z_4$, punishment against player $i$ is effective if player $i$'s minmax value is not high, i.e., $h\notin Q_i$, and thus the set of such histories in $Z_4$ where punishment is not effective is
\[ Q_i^4 \,:=\, Z_4\cap Q_i. \]
We have
\begin{align}
\nonumber\prob_{h^*,\sigma_i,\widehat\sigma_{-i}}\bigl(r^k \in Q_i^4
&\hbox{ for some } k\in\{n^*,n^*+1,\ldots,\theta^K(r)\}\bigr)\\
\nonumber &=\,\prob_{h^*,\sigma_i,\sigma^*_{-i}}\bigl(r^k \in Q_i^4 
\hbox{ for some } k\in\{n^*,n^*+1,\ldots,\theta^K(r)\}\bigr)\\
\label{equ:easy:13}
&\leq\, 
\E_{h^*,\sigma_i,\sigma^*_{-i}}
\bigl[ \zeta_i(r,n^*,\theta^K(r)) \bigr]\\
&\leq\, \sqrt\delta,
\label{equ:easy:12}
\end{align} 
where \Eqref{equ:easy:13} holds by \Eqref{equ:easy:1}
and since the quantity $\zeta_i(r,n^*,\theta^K(r))$ that corresponds to $Q_i^4$ (and $\sigma^*_{-i})$ cannot be larger than the quantity $\zeta_i(r,n^*,\theta^K(r))$ that corresponds to $Q_i$ (because $Q_i^4 \subseteq Q_i)$,
and \Eqref{equ:easy:12} holds since on $E_4$ we have $\nu_i>\theta^K$ and thus $\zeta_i(r,n^*,\theta^K(r)) < \sqrt\delta$.

For each history $h\in Z_4\setminus Q_i$, by the definition of $Q_i$, we have $\overline{v}_i(h)\leq c_i+3\delta$. Hence, by \Eqref{equ:easy:12}, 
\begin{equation}
\label{equ:easy:set4}
\E_{h^*,\sigma_i,\widehat\sigma_{-i}}\bigl[ f_i \cdot \chi_{E_4} \bigr] \,\leq\, \prob_{h^*,\sigma_i,\widehat\sigma_{-i}}\bigl(E_4 \bigr)\cdot (c_i + 3\delta) + \sqrt\delta\cdot 1.
\end{equation}
\end{itemize}

Eqs.~\eqref{equ:easy:10}, \eqref{equ:easy:11}, \eqref{equ:easy:35}, and \eqref{equ:easy:set4} imply that
\[ 
\E_{h^*,\sigma_i,\widehat\sigma_{-i}}
\bigl[ f_i \bigr] \,=\,\sum_{j=1}^4\, \E_{h^*,\sigma_i,\widehat\sigma_{-i}}
\bigl[ f_i \cdot\chi_{E_j}\bigr]
\,\leq\,(c_i+5\delta)+\eta+\sqrt\delta.\]
Together with Eqs.~\eqref{equ:easy:39} and \eqref{choice-delta},
this yields that player $i$'s gain by devaiting is at most 
\[\E_{h^*,\sigma_i,\widehat\sigma_{-i}}[f_i]-\E_{h^*,\widehat\sigma}[f_i]\,\leq\,5\delta+\eta+\sqrt\delta+2(|I|+1)\sqrt\delta\,\leq\,\ep.\]
Hence, $\widehat\sigma$ is an $\ep$-equilibrium in the subgame $\Gamma_{h^*}$, as claimed.
\end{proof}

\section{Discussion}
\label{section:discussion}

The goal of this paper was to present four different existence results for multiplayer stochastic games with general payoff functions: the existence of subgame $\ep$-maxmin strategies, minmax $\ep$-acceptable strategy profiles, extensive-form correlated $\ep$-equilibria, and $\ep$-solvable subgames.

The main tool for each proof was the Martin function, which is an auxiliary function that assigns a real number to each history, and thereby induces a suitable one-shot game at each history of the stochastic game. This function was invented and its existence was proven in Martin (1998), see also Maitra and Sudderth (1998), in the context of two-player zero-sum games, and later it was generalized to multiplayer games in Ashkenazi-Golan, Flesch, Predtetchinski, and Solan (2022a). As discussed in Example~\ref{example:mn},
this function is in fact also an extension of the technique developed by Mertens and Neyman (1981) to prove the existence of the uniform value in two-player zero-sum stochastic games,
and by Neyman (2003) to prove the existence of the uniform minmax value and the uniform maxmin value in multiplayer stochastic games. We refer to the Introduction for recent papers where the Martin function was used for multiplayer repeated games and for multiplayer stochastic games with general payoff functions.

We provided short and straighforward proofs, based on the Martin function, for the existence of subgame $\ep$-maxmin strategies and of extensive-form correlated $\ep$-equilibria; these statements were originally proven by Mashiah-Yaakovi (2015) using different tools. The existence of minmax $\ep$-acceptable strategy profiles and the existence of $\ep$-solvable subgames,
which were proven respectively by Solan (2018) and Vieille (2000c) for the case of the long-run average payoff,
are new for general payoff functions.

The most complicated proof in this paper is the proof for the existence of $\ep$-solvable subgames; cf.~Theorem~\ref{theorem:easy}. This proof uses several ideas
that are not needed for the specific case of 
finitely many states and the 
long-run average payoffs. 
One such idea is a test that verifies that it has only low probability to reach a history where the minmax value of some player is high. 
Such a test was already used in Flesch and Solan (2022a) in the context of two-player stochastic games with shift-invariant payoffs.

A second idea is to approximate a given set of ``good'' runs
by a closed subset.
This approximation allows us to identify a deviation in finite time,
and punish the deviator.
Such an approximation was already used in various papers, such as in Simon (2007), Shmaya (2011), Ashkenazi-Golan, Flesch, Predtetchinski, and Solan (2022a,b), Ashkenazi-Golan, Flesch, and Solan (2022), and Flesch and Solan (2022a,b).

A third idea is the identification of a deviator from the play, when the players use non-pure strategies, based on Alon, Gunby, He, Shmaya, and Solan (2022).
This result was employed in Flesch and Solan (2022b) to provide an alternative proof 
for the existence of $\ep$-equilibria in multiplayer repeated games with tail-measurable payoffs.
As far as we know, Theorem~\ref{theorem:easy} is the first result where the use of this technique is imperative for a proof.

We hope that the Martin function and the existence results presented in this paper will be useful in deriving more results for stochastic games with general payoffs.

\end{document}